\documentclass{article}
\usepackage{latexsym}
\usepackage{amssymb}
\usepackage{amsmath}
\usepackage{color}
\usepackage{graphicx}
\usepackage{amsthm}
\usepackage{indentfirst}
\usepackage{mathrsfs}
\allowdisplaybreaks


\newtheorem{theorem}{\color{black}\indent Theorem}[section]
\newtheorem{lemma}{\color{black}\indent Lemma}[section]

\newtheorem{definition}{\color{black}\indent Definition}[section]
\newtheorem{remark}{\color{black}\indent Remark}[section]
\newtheorem{corollary}{\color{black}\indent Corollary}[section]
\newtheorem{example}{\color{black}\indent Example}[section]
\textheight210mm
\textwidth145mm
\hoffset-1.2cm
\voffset-1cm
\renewcommand{\baselinestretch}{1.2}

\DeclareMathOperator{\diag}{{diag}}
\DeclareMathOperator{\ind}{{ind}}
\DeclareMathOperator{\dist}{{dist}}
\DeclareMathOperator{\fix}{{Fix}}
\begin{document}
\large
\title{Global Weinstein Type Theorem on Multiple Rotating Periodic Solutions for Hamiltonian Systems \footnote{This work was supported by NSFC (No.11901080, 12071175), Project of Science and Technology Development of Jilin Province, China (No. 20190201302JC, 20200201270JC).}}
\author{{Jiamin Xing$^{a}$ \thanks{ E-mail address : xingjiamin1028@126.com} ,~ Xue Yang$^{b}$ \thanks{ E-mail address : yangxuemath@163.com} ,~ Yong Li$^{b,a}$} \thanks{ E-mail address : liyongmath@163.com}\\
{$^{a}$School of Mathematics and Statistics, and}\\
{Center for Mathematics and Interdisciplinary Sciences,}\\
{Northeast Normal University, Changchun 130024, P. R. China.}\\
 {$^{b}$College of Mathematics, Jilin University,}\\
 {Changchun, 130012, P. R. China.}\\
 }
\date{}
\maketitle
\par
{\bf Abstract.}
\par This paper concerns the existence of multiple rotating periodic solutions for $2n$ dimensional convex Hamiltonian systems.
For the symplectic orthogonal matrix $Q$, the rotating periodic solution has the form of $z(t+T)=Qz(t)$, which might be periodic, anti-periodic, subharmonic or quasi-periodic according to the structure of $Q$. It is proved that there exist at least $n$ geometrically distinct rotating periodic solutions on a given $Q$ invariant convex energy surface under a pinching condition. As a result, it is proved that if the symmetric energy surface admits a nonsymmetric periodic solution, it has infinitely many periodic orbits. In order to prove the result, we introduce a new index on rotating periodic orbits.
\vskip 5mm  {\bf Key Words:} Multiple rotating periodic solutions; Global Weinstein type theorem, Convex Hamiltonian systems, Index theory.
\par
{\bf Mathematics Subject Classification(2020):} 70H05; 70H12.
\section{Introduction and main result}

Consider the following Hamiltonian system
\begin{eqnarray}\label{1.1}
z'=J\nabla H(z),
\end{eqnarray}
where the Hamiltonian function $H\in C^1(\mathbf{R}^{2n},\mathbf{R})$ and $J$ is the canonical symplectic matrix, that is
\begin{equation}\label{1.3}
J=\left(
\begin{array}{ccc}
0&I_n\\
-I_n&0
\end{array}
\right),
\end{equation}
$I_n$ is the $n\times n$ identity matrix.
For a solution $z(t)$ of \eqref{1.1}, the corresponding orbit is the set $z(\mathbf{R})$ and two solutions $z(t)$
and $y(t)$ are geometrically distinct or have different orbits if $z(\mathbf{R})\neq y(\mathbf{R})$.

It has been an important topic that how many geometrically distinct periodic solutions a Hamiltonian system (\ref{1.1}) admits on a given energy surface, and this has achieved some remarkable progress.  Lyapunov \cite{ly} and Horn \cite{ho} first proved a local theorem, i.e., when convexity and nonresonance occur at the origin, there exist $n$ distinct periodic orbits of (\ref{1.1}) on an energy surface near the origin. Weinstein \cite{we1} and Moser \cite{mo} greatly improved these results by removing the nonresonance assumption.
Rabinowitz \cite{ra} and Weinstein \cite{we2} established the first global existence result. For a complete global versions of Weinstein's theorem, it is due to  Ekeland and Lasry \cite{ek}, under convexity and a pinching condition, see also Ambrosetti and Mancini\cite{am} for another proof.
In Ekeland and Lassoued's \cite{ek-lass}, Ekeland and Hofer's \cite{ek-ho}, and Szulkin's \cite{sz0}, it was proved that there exist at least $2$ periodic orbits on the compact convex energy surface.  Long and Zhu \cite{Long1} proved that the number of periodic orbits was at least $[\frac{n}{2}]+1$, where $[a]$ denotes the greatest
integer which is not greater than $a$.
The existence of at least $n$ periodic orbits was proved by Liu, Long and Zhu's \cite{LiuC} with the assumption that
the energy surface is symmetric with respect to the origin and by Wang, Hu, and Long's \cite{Wang} for $n=3$.

It is a nature question that if the Hamiltonian is invariant under some group action, how many symmetric solutions will exist on the energy surface.
Such symmetric theory on periodic orbits was well developed by Girardi \cite{gi}, van Groesen \cite{gr}, Rabinowitz \cite{ra1}, Szulkin \cite{sz}, Bartsch and Clapp \cite{ba}, Long, Zhang and Zhu \cite{Long}, Zhang \cite{Zhang} and Liu \cite{Liu}.
Let $\mathrm{Sp}(2n)$ be the symplectic group, that is
$$\mathrm{Sp}(2n)=\{Q\in \mathrm{GL}(2n,\mathbf{R}):\ Q^\top JQ=J\},$$
where $\mathrm{GL}(2n,\mathbf{R})$ is the group of $2n\times 2n$ invertible real matrices and $Q^\top$ is the transpose of $Q$.
Recently, Ekeland \cite{Ek} considered the solution with boundary condition $z(T)=Qz(0)$ of convex Hamiltonian system for $Q\in\mathrm{Sp}(2n)$. At the end of the paper, he raised the question of whether such a solution will still exist if the energy surface of the Hamiltonian function is bounded. We \cite{xing1} studied the case that $Q\in \mathrm{Sp}(2n)\cap\mathrm{O}(2n)$ near the equilibrium, where $\mathrm{O}(2n)$ is the orthogonal group on $\mathbf{R}^{2n}$.

In this paper, we want to find the solution of \eqref{1.1} with
$$z(t+T)=Qz(t)\ \ \forall t\in\mathbf{R},$$
on a fixed bounded energy surface, where $T>0$ and $Q\in \mathrm{Sp}(2n)\cap\mathrm{O}(2n)$. Such a solution is called a $(Q,T)$-rotating periodic solution or a $Q$-rotating periodic solution with rotating period $T$.
Following the structure of $Q$, it will be seen that
a $(Q,T)$-rotating periodic solution $z(t)$ is a special quasi-periodic one that can be obtained by the rotation of itself on $[0,T]$. Obviously, $z(t)$ is $T$-periodic if $Q=I_{2n}$, anti-periodic if $Q=-I_{2n}$  and subharmonic if $Q^k=I_{2n}$ for some integer $k>1$. This kind of rotating periodic solution has been concerned in recent years, for example, see \cite{chang,liu,xing} and references therein.

The main result of the paper is the following.
\begin{theorem}\label{th1}
Assume $Q\in \mathrm{Sp}(2n)\cap\mathrm{O}(2n)$, $H\in C^1(\mathbf{R}^{2n},\mathbf{R})$ is $Q$ invariant, i.e., $H(Qz)=H(z)$ $\forall z$ $\in\mathbf{R}^{2n}$, for $\beta>0$ the set $C=\{z\in\mathbf{R}^{2n}:H(z)\leq\beta\}$ is strictly convex and the
boundary $S=H^{-1}(\beta)$ satisfies  $\nabla H(z)\neq0$ for
every $z\in S$. Suppose that there exist $r>0$ and $R\in(r,\sqrt{2}r)$ such that
$$B_r(0)\subset C\subset B_R(0).$$
Then there exist at least $n$ geometrically distinct $Q$-rotating periodic solutions of \eqref{1.1} on $S$.
\end{theorem}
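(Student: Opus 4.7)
The plan is to adapt the Ekeland–Lasry dual variational scheme to the $Q$-twisted boundary condition, with a tailor-made index theory on rotating periodic orbits playing the role that the Ekeland index does in the standard periodic case.

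First I would reduce to a gauge Hamiltonian. The $Q$-invariance of $H$ together with the strict convexity of $C$ implies the Minkowski functional $j_C$ is $Q$-invariant, so $H_0(z) := \tfrac12 j_C(z)^2$ is a $Q$-invariant, strictly convex, positively $2$-homogeneous Hamiltonian whose flow on $S=\{j_C=1\}$ is a time-reparametrization of the flow of $H$. It therefore suffices to produce $n$ geometrically distinct orbits of $\dot z = J\nabla H_0(z)$ satisfying $z(T)=Qz(0)$ (for various periods $T$) on $S$. On the Hilbert space
\[
E_Q = \{u \in W^{1,2}([0,T],\mathbf{R}^{2n}) : u(T) = Qu(0)\},
\]
I would set up the Clarke dual action
\[
\psi(u) = \int_0^T \Bigl[\tfrac12\langle J\dot u(t), u(t)\rangle + H_0^*(-J\dot u(t))\Bigr]\, dt,
\]
whose nontrivial critical points correspond, after rescaling, to $(Q,T')$-rotating periodic orbits of $H_0$ on $S$. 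The time-translation $u(\cdot)\mapsto u(\cdot+s)$ preserves $E_Q$, since $u(T+s)=Qu(s)$ whenever $u(T)=Qu(0)$, so $\psi$ inherits an $S^1$-symmetry and critical points come in $S^1$-orbits.

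Next I would build the new index on rotating periodic orbits announced in the abstract. The self-adjoint operator $-J\,d/dt$ on $E_Q$ has a spectrum determined by the eigenangles $\theta_1,\ldots,\theta_{2n}$ of $Q$: in a simultaneous eigenbasis of $Q$, its eigenvalues become $(2\pi m + \theta_k)/T$, $m\in\mathbf{Z}$. To each $(Q,T)$-rotating critical orbit $z$ I would attach an integer index $i_Q(z)$ and nullity $\nu_Q(z)$ by counting, with multiplicity, the negative and zero eigenvalues of $-J\,d/dt - H_0''(z)$ on $E_Q$. The essential properties to establish are a comparison/monotonicity lemma under the convexity of $H_0$ and an iteration inequality for $Q^k$-rotating iterates, both analogues of classical facts for the Ekeland and Maslov indices but with the eigenangles of $Q$ tracked throughout. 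With $i_Q$ and $\nu_Q$ in hand, a Fadell–Rabinowitz pseudoindex (or cup-length argument) applied to the negative sublevel sets of $\psi$ modulo the $S^1$-action produces $n$ minimax critical values $c_1 \le \cdots \le c_n$ of $\psi$, each giving rise to a critical orbit.

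Finally, the pinched condition $R \in (r,\sqrt 2\,r)$ is what forces these $n$ critical values to represent $n$ geometrically distinct orbits rather than iterates of fewer orbits. The inclusions $B_r(0)\subset C\subset B_R(0)$ give $\|z\|/R \le j_C(z) \le \|z\|/r$, hence sharp two-sided bounds of the form $-\tfrac12\pi R^2 \le c_j \le -\tfrac12\pi r^2$ for each minimax level, obtained by explicit test-orbit computations in the round balls $B_r$ and $B_R$. Since $R^2 < 2r^2$, any double (or higher) iterate of a critical orbit has action strictly below $-\pi r^2$, hence below the admissible range, so no $c_j$ can come from an iterated orbit. The main obstacle I anticipate lies precisely here: the Ekeland–Lasry iteration estimate must be re-derived in the $Q$-twisted setting, where the $k$-th iterate of a $(Q,T)$-orbit is naturally a $(Q^k,kT)$-orbit and the index and action increments have to be controlled using the eigenangles of $Q^k$ rather than the usual integer multiples of $2\pi$. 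Once this action-index balance rules out iterated orbits among the $n$ minimax levels, the $n$ critical values yield $n$ geometrically distinct $(Q,T)$-rotating periodic solutions of \eqref{1.1} on $S$, as claimed.
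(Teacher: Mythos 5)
There is a genuine gap at the center of your plan: the time-translation action $\mathcal{Q}(s)u(\cdot)=u(\cdot+s)$ on the space of $Q$-rotating functions is \emph{not} an $S^1$-action unless $Q$ has finite order. We have $\mathcal{Q}(kT)u=Q^ku$, so $\mathcal{Q}(s)$ is periodic in $s$ only when $Q^k=I_{2n}$ for some $k$. When the rotation angles $\theta_j$ are irrational multiples of $2\pi$ (the quasi-periodic case the paper is specifically designed to cover), $\{\mathcal{Q}(s)\}_{s\in\mathbf{R}}$ is a noncompact $\mathbf{R}$-action, $\mathcal{Q}(s)$-orbits are not circles (their closures are tori), and none of the $S^1$-equivariant machinery you invoke — the Fadell--Rabinowitz $S^1$-cohomological pseudoindex, cup-length in $S^1$-equivariant cohomology, or ``critical points come in $S^1$-orbits'' — applies as stated. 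Overcoming precisely this obstruction is the paper's main technical contribution: it constructs a bespoke $\mathcal{Q}(s)$-index using $\mathbf{C}$-valued test maps with equivariance $\Phi_j(\mathcal{Q}(s)x)=e^{\sqrt{-1}(2\pi n_j+\theta_{p_j})s/T}\Phi_j(x)$ (characters of $\mathbf{R}$ whose frequencies come from the eigenangles of $Q$), verifies this satisfies Rabinowitz's abstract axioms via Bohr almost-periodic averaging in place of integration over a compact group, and then has to prove a \emph{weaker} multiplicity lemma (index $\geq k$ gives at least $k$ orbits, not infinitely many) because the $S^1$-type rigidity is genuinely lost. None of this is captured by writing down Maslov/Morse-type integers $i_Q,\nu_Q$ from eigenvalue counts; those are a different kind of index altogether and do not by themselves furnish an equivariant minimax scheme in a non-$S^1$ setting.

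A secondary point: even leaving aside the $S^1$ issue, the mechanism you sketch for ruling out iterates — an Ekeland--Lasry-style iteration inequality for a Maslov-type index of $(Q^k,kT)$-orbits — is a heavier route than the paper takes and would require developing an iteration theory for rotating orbits that the paper deliberately avoids. The paper instead follows the Ambrosetti--Mancini line: work on the dual functional $E(y)=\int_0^T\bigl(H^*(y)-\tfrac12\langle y,Ky\rangle\bigr)\,dt$ with a $q$-homogeneous ($1<q<2$) gauge Hamiltonian, let $m^*$ be the infimum of $E$ over $y$ with a strictly smaller $Q$-rotating period, prove the arithmetic inequality $m\leq 2^{p/(p-2)}m^*$, and then use $R<\sqrt2 r$ and an explicit computation of the index of the test set $M$ of circular test orbits to pin all $n$ minimax levels strictly below $m^*$. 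That directly forces each critical $y_j$ to have minimal $Q$-rotating period $T$, so the $n$ orbits are geometrically distinct. This bypasses any index-iteration lemma. Your high-level outline (dual action, test orbits in the inscribed and circumscribed balls, action window $R^2<2r^2$) is in the right spirit, but the missing equivariance structure and the omission of a replacement for the $S^1$-index are the decisive gaps.
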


In \cite{HWK}, Hofer et al. proved that the number of periodic orbits on the compact convex energy surface of a $4$-dimensional Hamiltonian system is exactly $2$ or $+\infty$. It is conjectured that the number of periodic orbits on each compact convex energy surface of a $2n$ dimensional Hamiltonian
system is either $n$ or $+\infty$, see \cite{Wang}.
From the proof of Theorem \ref{th1}, we see that in the case of $Q=I_{2n}$, the conjecture holds under the pinching condition, which can actually be obtained in \cite{ek} and \cite{am}, but has been somewhat overlooked. Based on this fact and Theorem \ref{th1}, we have the following corollary.
\begin{corollary}
Assume the conditions of Theorem \ref{th1} hold, $Q^k=I_{2n}$ for some integer $k>1$ and system \eqref{1.1} has a periodic solution that is not $Q$-rotating periodic. Then system \eqref{1.1} has infinitely many periodic orbits on $S$.
\end{corollary}

The following simple example illustrates that the system may have an infinite number of periodic orbits on the energy surface, but only a finite number of them are $Q$-rotating periodic. It is also possible that there are only a finite number of periodic orbits, but there exist infinitely many $Q$-rotating periodic orbits.
\begin{example}
Consider the Hamiltonian
\begin{align}\label{e1}
H(x,y)=\sum\limits_{j=1}^n\frac{1}{2}\omega_j(x_j^2+y_j^2),
\end{align}
where $x=(x_1,\cdots,x_n)^\top$, $y=(y_1,\cdots,y_n)^\top$, $0<\omega_1\leq \omega_2\leq\cdots\leq\omega_n$. Assume $\omega_n<2\omega_1$, then the pinching condition holds.
Let
\begin{equation*}
Q=P\diag\left(\left(
\begin{array}{ccc}
\cos \theta_1&\sin \theta_1\\
-\sin\theta_1&\cos \theta_1
\end{array}
\right),\cdots,\left(
\begin{array}{ccc}
\cos \theta_n&\sin \theta_n\\
-\sin\theta_n&\cos \theta_n
\end{array}
\right)\right)P^\top,
\end{equation*}
where $0\leq\theta_j<2\pi$ for $1\leq j\leq n$ and $P\in \mathrm{O}(2n)$ such that
\begin{equation*}
P^{\top}JP=\diag\left(\left(
\begin{array}{ccc}
0&1\\
-1&0
\end{array}
\right),\cdots,\left(
\begin{array}{ccc}
0&1\\
-1&0
\end{array}
\right)\right).
\end{equation*}
Obviously, $H(Q(x,y))=H(x,y)$ for $(x,y)\in\mathbf{R}^{2n}$ and the solution $z(t)=(x(t),y(t))$ has the form
\begin{equation*}
z(t)=P\diag\left(\left(
\begin{array}{ccc}
\cos \omega_1t&\sin \omega_1t\\
-\sin\omega_1t&\cos \omega_1t
\end{array}
\right),\cdots,\left(
\begin{array}{ccc}
\cos \omega_nt&\sin \omega_nt\\
-\sin\omega_nt&\cos \omega_nt
\end{array}
\right)\right) z_0,
\end{equation*}
with $z_0\in\mathbf{R}^{2n}$.
It is easy to see that when $\frac{\omega_i}{\omega_j}\notin\mathbf{Q}$ for all $i\neq j$, there exist $n$ periodic orbits on the energy surface $H^{-1}(\beta)$ for $\beta>0$, otherwise there are infinite number of periodic orbits ; when $\frac{\omega_i}{\omega_j}\neq\frac{\theta_i+2k_i\pi}{\theta_j+2k_j\pi}$
for all $i\neq j$ and $k_i, k_j\in\mathbf{Z}$, there exist $n$ $Q$-rotating periodic orbits on the energy surface, otherwise it has infinitely many $Q$-rotating periodic orbits.
\end{example}
Based on Theorem \ref{th1} and the long-standing conjecture about periodic orbits, we think that the following result holds true.

\emph{Conjecture: On the compact convex energy surface of a $2n$-dimensional $Q$ invariant Hamiltonian system, there exist at least $n$ $Q$-rotating periodic orbits.}

The paper is organized as follows: In section 2, we introduce a new index on rotating periodic orbits, which satisfies the abstract framework of Rabinowitz's index theory \cite{ra-in}, but different from the $S^1$-index. In particular, our index also works on special quasi-periodic orbits when $Q^k\neq I_{2n} ~~\forall k\in\mathbf{N}_+$. We will give the proof of the main result via the index theory and Ambrosetti and Mancini's method in section 3.

\section{$\mathcal{Q}(s)$-index and abstract critical point theorem}
In this section, we will introduce a new index on rotating periodic orbits. First, let us recall the concept of index due to Rabinowitz \cite{ra-in}.

Suppose $\mathcal{E}$ is a Banach space with a group $\mathfrak{g}$ acting on it. Denote by
$$\fix\mathfrak{g}=\{z\in\mathcal{E}:gz=z,\ {\rm for\ all}\ g\in\mathfrak{g}\}.$$
Let
$$\mathfrak{\Gamma}=\{\Gamma\subset\mathcal{E}: \Gamma {\rm\ is\ closed},\ g(\Gamma)\subset\Gamma\ {\rm for\ all}\ g\in\mathfrak{g}\}$$
be the set of $\mathfrak{g}$ invariant closed subsets of $\mathcal{E}$.
\begin{definition}\label{def1}
An index for $(\mathcal{E},\mathfrak{g})$ is a mapping $i:\mathfrak{\Gamma}\rightarrow \mathbf{N}\cup\{\infty\}$ such that for
all $\Gamma_1,\Gamma_2\in\mathfrak{\Gamma}$,
\begin{enumerate}
\item[{\rm(i)}] Normalization: if $z\notin\fix\mathfrak{g}$, $i(\cup_{g\in\mathfrak{g}}gz)=1$;

\item[{\rm(ii)}] Mapping property: if $R: \Gamma_1\rightarrow\Gamma_2$ is continuous and equivariant which means
$Rg=gR$ for all $g\in\mathfrak{g}$, then
$$i(\Gamma_1)\leq i(\Gamma_2);$$

\item[{\rm(iii)}] Monotonicity property: If $\Gamma_1\subset\Gamma_2$, then $i(\Gamma_1)\leq i(\Gamma_2)$;

\item[{\rm(iv)}] Continuity property:  if $\Gamma_1$ is compact and $\Gamma_1\cap\fix\mathfrak{g}=\emptyset$, then
$i(\Gamma_1)<\infty$ and there exists a neighborhood $D\in\mathfrak{\Gamma}$ of $\Gamma_1$ such that
$$i(D)=i(\Gamma_1);$$

\item[{\rm(v)}]  Subadditivity: $i(\Gamma_1\cup\Gamma_2)\leq i(\Gamma_1)+i(\Gamma_2)$.
\end{enumerate}
\end{definition}

By the assumption $Q\in \mathrm{Sp}(2n)\cap O(2n)$, if $1$ and $-1$ are eigenvalues of $Q$, their multiplicities must be even, for example see \cite{Wiggins}. Since $QJ=JQ$, there exists an orthogonal matrix $P$ such that
\begin{equation}\label{a2.3}
P^{\top}JP=\diag\left(\left(
\begin{array}{ccc}
0&1\\
-1&0
\end{array}
\right),\cdots,\left(
\begin{array}{ccc}
0&1\\
-1&0
\end{array}
\right)\right),
\end{equation}
and
\begin{align}\label{a2.4}
P^\top QP=\diag(M_1,\cdots,M_{n}),
\end{align}
where
\begin{equation*}
\begin{split}
M_j&=\left(
\begin{array}{ccc}
\cos \theta_j&\sin \theta_j\\
-\sin\theta_j&\cos \theta_j
\end{array}
\right),
\end{split}\ \ 0\leq\theta_j< 2\pi,\ 1\leq j\leq n.
\end{equation*}

For a constant $p>1$, denote
$$X=\{x\in L^p([0,T],\mathbf{R}^{2n}): x(t+T)=Qx(t)\ {\rm for\ all}\ t\in\mathbf{R}\}$$
with the norm $\|\cdot\|$ defined by
$\|x\|=\left(\int_0^T|x(t)|^p\mathrm{d}t\right)^{\frac{1}{p}}.$
Then it is easy to see that $X$ is a Banach space.
For $s\in\mathbf{R}$, consider the group action on $X$ by
$$\mathcal{Q}(s)x(t)=x(t+s).$$
Clearyly, $X$ is $\mathcal{Q}(s)$-invariant, that is, if $x\in X$, then $\mathcal{Q}(s)x\in X$ for all $s\in\mathbf{R}$.

Now we give the $\mathcal{Q}(s)$-index:
\begin{definition}\label{d1}
A $\mathcal{Q}(s)$ index $\ind (\Gamma)$ of an invariant closed subset $\Gamma\subset X$ is the smallest integer $k$ such that there exists a
$$\Phi=(\Phi_1^\top,\cdots,\Phi_m^\top)^\top\in C(\Gamma,\mathbf{C}^{k}\setminus\{0\})$$
with $\Phi_j\in C(\Gamma,\mathbf{C}^{k_j})$ and
\begin{eqnarray}\label{02.4}
\Phi_j(\mathcal{Q}(s)x)=e^{\frac{\sqrt{-1}(2\pi n_j+\theta_{p_j})}{T}s}\Phi_j(x)\ \ \forall s\in\mathbf{R},\  x\in\Gamma,
\end{eqnarray}
where $k_j\in\mathbf{N}_+$, $k_1+\cdots+k_m=k$, $n_j\in\mathbf{Z}$, $1\leq p_j\leq n$ and $2\pi n_j+\theta_{p_j}\neq 0$ for $1\leq j\leq m$.
If such a $\Phi$ does not exist, define $\ind (\Gamma)=+\infty$, and if $\Gamma=\emptyset$, define $\ind (\Gamma)=0$.
\end{definition}

We need to show:
\begin{lemma}\label{p1}
The $\mathcal{Q}(s)$-index is an index in the sense of Definition \ref{def1}.
\end{lemma}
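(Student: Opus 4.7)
I would verify the five axioms of Definition \ref{def1} by exploiting the Fourier decomposition of $X$ under the $\mathcal{Q}(s)$-action. Setting $y=P^{\top}x$ and pairing coordinates into the complex functions $w_{l}(t):=y_{2l-1}(t)+\sqrt{-1}\,y_{2l}(t)$ for $l\in\{1,\ldots,n\}$ reduces the twisted boundary condition to $w_{l}(t+T)=e^{-\sqrt{-1}\,\theta_{l}}w_{l}(t)$, hence each $w_{l}$ admits an $L^{p}$-Fourier expansion along the frequencies $\alpha_{N,l}:=(2\pi N-\theta_{l})/T$, $N\in\mathbf{Z}$. The associated Fourier functionals $c_{N,l}\colon X\to\mathbf{C}$ are continuous, linear and $\mathcal{Q}$-equivariant with character $\alpha_{N,l}$, and $\fix\mathcal{Q}$ is identified with the constants in $\ker(Q-I)$, equivalently the span of modes satisfying $\alpha_{N,l}=0$. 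Thus $x\notin\fix\mathcal{Q}$ iff some $c_{N,l}(x)\neq 0$ with $\alpha_{N,l}\neq 0$.

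With this dictionary in hand most axioms are routine. For \textbf{Normalization}, take a single nontrivial mode $c_{N,l}$ not vanishing on the orbit of $z$; then $\Phi(y)=c_{N,l}(y)$ is a one-component equivariant map (after absorbing a sign into the index $n_{j}$ in (\ref{02.4})), so $\ind\bigl(\cup_{s}\mathcal{Q}(s)z\bigr)=1$. \textbf{Mapping} follows by composing the optimal $\Phi$ on $\Gamma_{2}$ with the equivariant $R\colon\Gamma_{1}\to\Gamma_{2}$, and \textbf{Monotonicity} by restricting the optimal $\Phi$ from $\Gamma_{2}$ to $\Gamma_{1}$. For \textbf{Subadditivity}, one first extends optimal maps $\Phi^{(1)},\Phi^{(2)}$ realizing $\ind(\Gamma_{1})$ and $\ind(\Gamma_{2})$ to a common invariant neighborhood of $\Gamma_{1}\cup\Gamma_{2}$ using the construction for (iv) below, and then concatenates them into a map with $\ind(\Gamma_{1})+\ind(\Gamma_{2})$ components: at each point of $\Gamma_{1}\cup\Gamma_{2}$ at least one block is nonzero.

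The substantive axiom is \textbf{Continuity}. Finiteness is easy: for each $x\in\Gamma_{1}$ pick a nontrivial mode $(N_{x},l_{x})$ with $c_{N_{x},l_{x}}(x)\neq 0$; the sets $U_{x}:=\{y\in X:c_{N_{x},l_{x}}(y)\neq 0\}$ are open and $\mathcal{Q}$-invariant, and a finite subcover $U_{x_{1}},\ldots,U_{x_{k}}$ of the compact $\Gamma_{1}$ yields a nowhere-vanishing $k$-component equivariant map on $D_{0}:=\bigcup_{i}U_{x_{i}}$, so $\ind(\Gamma_{1})<\infty$. The harder half is the sharp equality $\ind(D)=\ind(\Gamma_{1})$, which requires extending an optimal $\Phi\colon\Gamma_{1}\to\mathbf{C}^{m}\setminus\{0\}$ ($m=\ind(\Gamma_{1})$) to a nowhere-vanishing \emph{equivariant} map on some invariant neighborhood. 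This is the main technical obstacle, since the acting group $\mathbf{R}$ is non-compact, so the textbook ``Tietze extend + average over the group'' recipe does not directly apply. My plan is to exploit the identity $\mathcal{Q}(T)=\hat Q$ (pointwise multiplication by $Q$), first Tietze-extending each $\Phi_{j}$ componentwise to $\tilde\Phi_{j}\colon X\to\mathbf{C}$, and then setting
\[
\hat\Phi_{j}(x)=\frac{1}{T}\int_{0}^{T}e^{-\sqrt{-1}(2\pi n_{j}+\theta_{p_{j}})s/T}\,\tilde\Phi_{j}(\mathcal{Q}(s)x)\,ds,
\]
combined with a symmetrization over the $\hat Q$-action using the character $e^{\sqrt{-1}\,\theta_{p_{j}}}$ (which is forced on $\Gamma_{1}$ by (\ref{02.4}) with $s=T$) to ensure $T$-periodicity of the integrand and hence genuine $\mathcal{Q}(s)$-equivariance. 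Continuity and closeness to $\Phi$ then force $\hat\Phi\neq 0$ on a sufficiently small invariant neighborhood $D$, and combined with monotonicity this gives $\ind(D)=m=\ind(\Gamma_{1})$, completing the verification.
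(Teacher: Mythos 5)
Your plan tracks the paper's own proof quite closely for axioms (i)--(iii) and for the finiteness half of (iv): the paper also diagonalizes $Q$ (via a unitary $P_{0}$), uses nonvanishing Fourier-type coefficient functionals $\Phi(x)=\int_{0}^{T}e^{\sqrt{-1}(2\pi n_{j_{0}}-\theta_{j_{0}})s/T}y_{j_{0},1}(s)\,\mathrm{d}s$ to realize normalization, composes with $R$ for the mapping property, and covers the compact $\Gamma_{1}$ by finitely many invariant sets where a single mode is nonzero to get $\ind(\Gamma_{1})<\infty$. Your subadditivity argument (extend, then concatenate) is also the paper's.

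Where the proposal has a genuine gap is in the equivariant extension step of (iv). You correctly note that the naive ``Tietze extend, then average over the acting group'' fails because $\mathbf{R}$ is non-compact, and you propose instead the finite average $\frac{1}{T}\int_{0}^{T}e^{-\sqrt{-1}\alpha_{j}s}\tilde\Phi_{j}(\mathcal{Q}(s)x)\,\mathrm{d}s$ followed by a ``symmetrization over the $\hat Q$-action.'' But $\hat Q$ generates $\mathbf{Z}$, which is again non-compact whenever $Q$ has infinite order --- exactly the quasi-periodic case this paper is designed to handle --- so a finite-group symmetrization is unavailable, and the plan as stated does not produce an equivariant map. The step is salvageable, but only by replacing your $\hat Q$-symmetrization with a Bohr (Cesàro) mean, and this is precisely how the paper does it in one stroke: it defines
\begin{equation*}
\Psi_{j}(x)=\lim_{t\to\infty}\frac{1}{t}\int_{0}^{t}e^{\frac{\sqrt{-1}(-2\pi n_{j}-\theta_{p_{j}})}{T}s}\,\tilde\Phi_{j}(\mathcal{Q}(s)x)\,\mathrm{d}s,
\end{equation*}
and justifies existence of the limit by the observation that $s\mapsto\tilde\Phi_{j}(\mathcal{Q}(s)x)$ is almost periodic (because $Q\in O(2n)$ forces the $\mathcal{Q}(s)$-orbit to be relatively compact in $L^{p}$). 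This $\Psi_{j}$ is manifestly equivariant under all of $\mathcal{Q}(s)$, agrees with $\Phi_{j}$ on $\Gamma_{1}$, and is defined on all of $X$, so it serves simultaneously for (iv) and (v). So the textbook recipe does apply after all, provided ``average over the group'' is read as the Bohr mean for an amenable non-compact group with relatively compact orbits; make that explicit and verify almost periodicity, and your argument closes.
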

\begin{proof}
{\rm (i)} Assume $x\in X\backslash\fix\{\mathcal{Q}(s)\}$.
Consider the function
\begin{eqnarray}
y(t)=P_0^{-1}x(t)=(y_{1,1}(t),y_{1,2}(t),\cdots,y_{n,1}(t),y_{n,2}(t))^\top,
\end{eqnarray}
where $y_{j,k}\in\mathbf{C}$ for $1\leq j\leq n$, $k=0,1$, and $P_0$ is the unitary matrix such that
$$P_0^{-1}QP_0=\diag(e^{\sqrt{-1}\theta_1},e^{-\sqrt{-1}\theta_1},\cdots,e^{\sqrt{-1}\theta_n},e^{-\sqrt{-1}\theta_n}).$$
Then one has
\begin{align}
y(t+T)&=P_0^{-1}x(t+T)=P_0^{-1}QP_0P_0^{-1}x(t)\nonumber\\
&=(e^{\sqrt{-1}\theta_1}y_{1,1}(t),e^{-\sqrt{-1}\theta_1}y_{1,2}(t),\cdots,e^{\sqrt{-1}\theta_n}y_{n,1}(t),e^{-\sqrt{-1}\theta_n}y_{n,2}(t))^\top.
\end{align}
Hence for $1\leq j\leq n$,
$$e^{-\frac{\sqrt{-1}\theta_{j}}{T}(t+T)}y_{j,1}(t+T)=e^{-\frac{\sqrt{-1}\theta_{j}}{T}t}y_{j,1}(t),\ \ t\in\mathbf{R}.$$
Since $x\notin \fix\{\mathcal{Q}(s)\}$,
there exist a component $y_{j_0,1}(t)$ and $n_{j_0}\in\mathbf{Z}$ with $2\pi n_{j_0}-\theta_{{j_0}}\neq0$, such that
\begin{eqnarray}
\Phi(x)=\int_0^Te^{\frac{\sqrt{-1}(2\pi n_{j_0}-\theta_{j_0})}{T}s}y_{j_0,1}(s)\mathrm{d}s\neq0.
\end{eqnarray}
Clearly, $\Phi$ is continuous and
$$\Phi(\mathcal{Q}(s)x)=e^{\frac{\sqrt{-1}(-2\pi n_{j_0}+\theta_{j_0})}{T}s}\Phi(x).$$
For $z\in\overline{\cup_{s\in\mathbf{R}}\mathcal{Q}(s)x}$, there exist $s_k\in\mathbf{R}$ for $k\geq 1$ such that
$\lim\limits_{k\rightarrow\infty}\mathcal{Q}(s_k)x=z$. Then
$$|\Phi(z)|=\lim\limits_{k\rightarrow\infty}|\Phi(\mathcal{Q}(s_k)x)|=|\Phi(x)|\neq0,$$
and for each $s\in\mathbf{\mathbf{R}}$,
\begin{align*}\Phi(\mathcal{Q}(s)z)&=\lim\limits_{k\rightarrow\infty}\Phi(\mathcal{Q}(s)\mathcal{Q}(s_k)x)\\
&=e^{\frac{\sqrt{-1}(-2\pi n_{j_0}+\theta_{j_0})}{T}s}\lim\limits_{k\rightarrow\infty}\Phi(\mathcal{Q}(s_k)x)\\
&=e^{\frac{\sqrt{-1}(-2\pi n_{j_0}+\theta_{j_0})}{T}s}\Phi(z).
\end{align*}
Hence $\ind(\overline{\cup_{s\in\mathbf{R}}\mathcal{Q}(s)x})=1$.

{\rm (ii)} If $\ind(\Gamma_2)=\infty$, the result is obvious. If $\ind(\Gamma_2)=k<\infty$, there exists a
$$\Phi=(\Phi_1^\top,\cdots,\Phi_m^\top)^\top\in C(\Gamma_2,\mathbf{C}^{k}\setminus\{0\})$$
such that $\Phi_j\in C(\Gamma_2,\mathbf{C}^{k_j})$ and
\begin{eqnarray}\label{3.3}
\Phi_j(\mathcal{Q}(s)x)=e^{\frac{\sqrt{-1}(2\pi n_j+\theta_{p_j})}{T}s}\Phi_j(x)\ \ \forall s\in\mathbf{R},\  x\in\Gamma_2,
\end{eqnarray}
where $k_j\in\mathbf{N}_+$, $k_1+\cdots+k_m=k$, $n_j\in\mathbf{Z}$, $1\leq p_j\leq n$ and $2\pi n_j+\theta_{p_j}\neq 0$ for $1\leq j\leq m$.
Define $\tilde{\Phi}(x)=\Phi\circ R(x)$ for $x\in\Gamma_1$. Then
\begin{align*}
\tilde{\Phi}_j(\mathcal{Q}(s)x)&=\Phi_j\circ R(\mathcal{Q}(s)x)=\Phi_j(\mathcal{Q}(s)R(x))\\
&=e^{\frac{\sqrt{-1}(2\pi n_j+\theta_{p_j})}{T}s}\Phi_j\circ R(x)\\
&=e^{\frac{\sqrt{-1}(2\pi n_j+\theta_{p_j})}{T}s}\tilde{\Phi}(x),
\end{align*}
which yields $\ind(\Gamma_1)\leq\ind(\Gamma_2)$.

{\rm (iii)} Since the inclusion map is equivariant, the  monotonicity property is obvious.

{\rm (iv)} When $\Gamma_1=\emptyset$, the result is obvious. When $\Gamma_1\neq\emptyset$, for each $x\in\Gamma_1$, similar to the proof of (i), the function
\begin{eqnarray}\label{02.5}
y(t)=P_0^{-1}x(t)=(y_{1,1}(t),y_{1,2}(t),\cdots,y_{n,1}(t),y_{n,2}(t))^\top,
\end{eqnarray}
has component $y_{j_0,1}(t)$ and $n_{j_0}\in\mathbf{Z}$ with $2\pi n_{j_0}-\theta_{{j_0}}\neq0$, such that
\begin{eqnarray}
\int_0^Te^{\frac{\sqrt{-1}(2\pi n_{j_0}-\theta_{j_0})}{T}s}y_{j_0,1}(s)\mathrm{d}s\neq0.
\end{eqnarray}
By the continuity, there exists a $\mathcal{Q}(s)$ invariant open neighbourhood $U_{x}$ of $x$ such that for each $\tilde{x}\in \overline{U}_{x}$ and $\tilde{y}=P_0^{-1}\tilde{x}$,
\begin{eqnarray}
\int_0^Te^{\frac{\sqrt{-1}(2\pi n_{0}-\theta_{j_0})}{T}s}\tilde{y}_{j_0,1}(s)\mathrm{d}s\neq0.
\end{eqnarray}
Since $\Gamma_1$ is compact, there exist finite $U_{x^l}$ for $1\leq l\leq r$ such that
$$\Gamma_1\subset\cup_{l=1}^r U_{x^l}.$$
For each $x\in \overline{U}_{x^l}$, there exists a component $y_{j_l,1}$ of $y^l=P_0^{-1}x^l$ and $n_{j_l}\in\mathbf{Z}$ with $2\pi n_{j_l}-\theta_{{j_l}}\neq0$, such that
$$\Upsilon_l(x)=\int_0^Te^{\frac{\sqrt{-1}(2\pi n_{l}-\theta_{j_l})}{T}s}y_{j_l,1}(s)\mathrm{d}s\neq0.$$
Let
$$\Upsilon(x)=(\Upsilon_1(x),\cdots,\Upsilon_r(x))^\top.$$
Then for each $x\in\Gamma_1$, $\Upsilon(x)\neq0$ and
$$\Upsilon_l(\mathcal{Q}(s)x)=e^{\frac{\sqrt{-1}(-2\pi n_l+\theta_{j_l})}{T}s}\Upsilon_l(x),$$
for $1\leq l\leq r$. Hence $\ind(\Gamma_1)<\infty$.

Assume $\ind(\Gamma_1)=k$.
Then there exists a mapping
$$\Phi=(\Phi_1^\top,\cdots,\Phi_m^\top)^\top\in C(\Gamma_1,\mathbf{C}^{k}\setminus\{0\})$$
satisfying Definition \ref{d1}. Since $\Gamma_1$ is a closed set, by Tietze's theorem there exists a continuous extension $\tilde{\Phi}_j$ of $\Phi_j$ over $X$ for each $1\leq j\leq m$. Now define a mapping $\Psi=(\Psi_1^\top,\cdots,\Psi_m^\top)^\top$ by
\begin{eqnarray}\label{3.24}
\Psi_j(x)=\lim\limits_{t\rightarrow\infty}\frac{1}{t}\int_{0}^{t}e^{\frac{\sqrt{-1}(-2\pi n_j-\theta_{p_j})}{T}s}\tilde{\Phi}_j(\mathcal{Q}(s)x)\mathrm{d}s.
\end{eqnarray}
Since $\tilde{\Phi}_j(\mathcal{Q}(s)x)$ is an almost periodic function on $s$, \eqref{3.24} is well defined.
Clearly, $\Psi$ is continuous, $\Psi(x)=\Phi(x)$ for $x\in\Gamma_1$ and
$$\Psi_j(\mathcal{Q}(s)x)=e^{\frac{\sqrt{-1}(2\pi n_j+\theta_{p_j})}{T}s}\Psi(x)\ \ \forall x\in X,\ s\in\mathbf{R}.$$
Let
$$\Gamma_\delta=\{x\in X:\ \dist(x,\Gamma_1)\leq\delta\}.$$
Then it is easy to see that $\Gamma_\delta$ is $\mathcal{Q}(s)$ invariant. Since $\Psi$ is continuous and $\Psi(x)\neq0$ for $x\in\Gamma_1$, there exists a $\delta>0$ such that $\Psi(x)\neq0$ for $x\in\Gamma_\delta$ which yields $\ind (\Gamma_\delta)\leq k$. By the monotonicity property of the index, one has $\ind (\Gamma_\delta)\geq k$. Thus $\ind (\Gamma_1)=\ind (\Gamma_\delta).$

{\rm (v)}
If $\ind(\Gamma_1)=\infty$ or $\ind(\Gamma_2)=\infty$, the result is obvious. Assume $\ind (\Gamma_1)=k_1<\infty$ and $\ind(\Gamma_2)=k_2<\infty$, then there exist
$$\Phi^j=((\Phi^j_1)^\top,\cdots,(\Phi^j_{m_j})^\top)^\top\in C(\Gamma_j,\mathbf{C}^{k_j}\setminus\{0\})$$
for $j=1,\ 2$ satisfying \eqref{02.4}. Similar to the proof of (iv), there exist continuous extensions $\Psi_j(x)$ of $\Phi^j(x)$ for $j=1, 2$ over $X$ satisfying \eqref{02.4}.  Define
$$\Psi:X\rightarrow\mathbf{C}^{k_1+k_2}$$
by
$$\Psi(x)=(\Psi_1^\top(x),\Psi_2^\top(x))^\top.$$
Then $\Psi(x)\neq0$ for every $x\in \Gamma_1\cup\Gamma_2$ and satisfies \eqref{02.4}, which yields
$\ind(\Gamma_1\cup\Gamma_2)\leq k_1+k_2$.
\end{proof}
\begin{definition}
A $\mathcal{Q}(s)$ orbit of $X$ is the set $\{\mathcal{Q}(s)x: s\in\mathbf{R}\}$ for $x\in X$.
\end{definition}
For compact group actions $\mathbf{Z}_2$ and $S^1$, an important property is that if the index is greater than or equal to $2$, the set contains infinitely many orbits. We can't obtain such a strong result for $\mathcal{Q}(s)$ index. Fortunately, we have the following lemma.
\begin{lemma}\label{p2}
Assume $\Gamma$ is an invariant subset of $X$, such that
$$\Gamma\cap\fix\{\mathcal{Q}(s)\}=\emptyset.$$
If $\ind \Gamma=k>0$, then there
exist at least $k$ $\mathcal{Q}(s)$-orbits on $\Gamma$.
\end{lemma}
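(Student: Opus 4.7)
The plan is to argue by contradiction, using exactly the three properties of the $\mathcal{Q}(s)$-index established in Lemma \ref{p1}: normalization, monotonicity, and subadditivity. Suppose, aiming for a contradiction, that $\Gamma$ contains at most $m$ distinct $\mathcal{Q}(s)$-orbits $\mathcal{O}_1,\ldots,\mathcal{O}_m$ with $m<k$, where $\mathcal{O}_j=\{\mathcal{Q}(s)x_j:s\in\mathbf{R}\}$ for some representative $x_j\in\Gamma$. Since $\Gamma$ is $\mathcal{Q}(s)$-invariant and closed, the closure of each orbit $\overline{\mathcal{O}_j}$ is a $\mathcal{Q}(s)$-invariant closed subset of $\Gamma$, and one has the set-theoretic identity $\Gamma=\bigcup_{j=1}^m\overline{\mathcal{O}_j}$.

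Next I would apply the normalization property (i) from Lemma \ref{p1} to each $\overline{\mathcal{O}_j}$. The hypothesis $\Gamma\cap\fix\{\mathcal{Q}(s)\}=\emptyset$ guarantees that every $x_j\notin\fix\{\mathcal{Q}(s)\}$, and the construction performed in the proof of part (i) of Lemma \ref{p1} produces, for each $j$, a single scalar map $\Phi^{(j)}\in C(\overline{\mathcal{O}_j},\mathbf{C}\setminus\{0\})$ satisfying the required equivariance relation \eqref{02.4}. Consequently $\ind(\overline{\mathcal{O}_j})=1$ for every $j=1,\ldots,m$.

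Finally, the subadditivity property (v), iterated $m-1$ times, together with the monotonicity property (iii) (used if one wants to separate the identity $\Gamma=\bigcup_{j}\overline{\mathcal{O}_j}$ from an inclusion) yields
\[
k=\ind(\Gamma)=\ind\Bigl(\bigcup_{j=1}^m\overline{\mathcal{O}_j}\Bigr)\leq\sum_{j=1}^m\ind(\overline{\mathcal{O}_j})=m<k,
\]
which is the desired contradiction. Therefore $\Gamma$ must contain at least $k$ distinct $\mathcal{Q}(s)$-orbits.

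The argument is essentially a direct application of the abstract index axioms, and there is no serious obstacle. The only mild subtlety worth flagging is that a $\mathcal{Q}(s)$-orbit $\mathcal{O}_j$ is not automatically closed (because $\mathcal{Q}(s)$ is an $\mathbf{R}$-action rather than a compact group action), so one must pass to the closures $\overline{\mathcal{O}_j}$ before invoking normalization and subadditivity; this is legitimate because $\overline{\mathcal{O}_j}\subset\Gamma$ avoids $\fix\{\mathcal{Q}(s)\}$ and because part (i) of Lemma \ref{p1} was already proved for the closure of the orbit rather than for the orbit itself.
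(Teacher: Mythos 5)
Your proposal is correct, and it is essentially the paper's argument: the paper likewise argues by contradiction, writes $\Gamma=\bigcup_{l=1}^{k-1}\overline{\{\mathcal{Q}(s)x^l:s\in\mathbf{R}\}}$, and builds for each orbit the Fourier-coefficient functional from Lemma \ref{p1}(i), stacking them into a map $\Phi\in C(\Gamma,\mathbf{C}^{k-1}\setminus\{0\})$ that shows $\ind\Gamma\le k-1$. The only cosmetic difference is that you invoke the abstract axioms (normalization and iterated subadditivity) instead of exhibiting the witnessing map directly — but those axioms were proved via the very same construction, so the content is the same.
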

\begin{proof}
Assume $\Gamma$ only contains $k-1$ orbits: $\mathcal{Q}(s)x^1, \cdots, \mathcal{Q}(s)x^{k-1}$.
Then
$$\Gamma=\bigcup_{i=1}^{k-1}\overline{\{\mathcal{Q}(s)x^i: s\in\mathbf{R}\}}.$$
As in the proof of Lemma \ref{p1}, for each $x^l\in\Gamma$ and $y^l=P_0^{-1}x^l$, there exist a component $y^l_{j_l,1}$ and $n_l\in\mathbf{Z}$ with
$2\pi n_l+\theta_{j_l}\neq0$ such that
$$\Phi_l(x):=\int_0^Te^{\frac{\sqrt{-1}(-2\pi n_{l}-\theta_{j_l})}{T}s}y^l_{j_l,1}(s)\mathrm{d}s\neq0,$$
for $1\leq l\leq k-1$. Then $\Phi_l(x)\neq0$ for $x\in\overline{\{\mathcal{Q}(s)x^l: s\in\mathbf{R}\}}$ and
$$\Phi_l(\mathcal{Q}(s)x)=e^{\frac{\sqrt{-1}(2\pi n_{l}+\theta_{j_l})}{T}s}\Phi_l(x)\ \  \forall s\in\mathbf{R},\ x\in\Gamma.$$
Hence
$\Phi=(\Phi_1,\cdots,\Phi_{k-1})^\top\in C(\Gamma,\mathbf{C}^{k-1}\setminus\{0\})$
and $\ind(\Gamma)\leq k-1$. This contradicts the assumption and the lemma is proved.
\end{proof}

Let $E$ be a real continuous function on $X$, for $j\geq 1$, define
\begin{eqnarray*}
\mathcal{A}_j=\{A\subset X: \ A\ {\rm is}\ \mathcal{Q}(s){\rm -invariant\ and}\ \ind A\geq j\},
\end{eqnarray*}
\begin{eqnarray}\label{02.15}
c_j=\inf\limits_{A\in\mathcal{A}_j}\sup\limits_{A}E.
\end{eqnarray}
It is easy to see that $\mathcal{A}_j\subset\mathcal{A}_{j-1}$ for $j\geq2$, and then
\begin{eqnarray}\label{2.14}
-\infty\leq c_1\leq c_2\leq\cdots\leq+\infty.
\end{eqnarray}
Denote
$$K_c=\{z\in X: E'(z)=0\ \ {\rm and}\ E(z)=c\},$$
$$E^c=\{z\in X: E(z)\leq c\},$$
where $c$ is a constant.

We are in a position to state an abstract critical point theorem on the $\mathcal{Q}(s)$-index.
\begin{theorem}\label{th3.1}
Assume $E\in C^1(X,\mathbf{R})$ is $\mathcal{Q}(s)$-invariant and satisfies (P.-S.) (Palais-Smale condition). If $-\infty<c_j<+\infty$ and $K_{c_j}\cap\fix\{\mathcal{Q}(s)\}=\emptyset$, then $c_j$
is a critical value of $E$. Moreover, if $c_i=c_j$ for some $i\leq j$, then
$$\ind K_{c_i}\geq j-i+1.$$
\end{theorem}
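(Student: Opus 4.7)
The plan is to run the classical Ljusternik–Schnirelmann minimax scheme with the values $c_j$ from \eqref{02.15}, the $\mathcal{Q}(s)$-index playing the role that the $\mathbf{Z}_2$- or $S^1$-index plays in the usual setting. Since Lemma \ref{p1} already verifies every axiom of Rabinowitz's abstract index, the only genuinely new ingredient I need is a $\mathcal{Q}(s)$-equivariant deformation lemma. The crucial observation is that $\mathcal{Q}(s)$ acts by translation of the variable $t$, hence preserves the $L^p$-norm on $X$, and the $\mathcal{Q}(s)$-invariance of $E$ forces $E'(\mathcal{Q}(s)x)=\mathcal{Q}(s)E'(x)$; a locally Lipschitz pseudo-gradient vector field can then be chosen equivariant through an equivariant partition of unity, which combined with the Palais–Smale condition yields the standard statement: for every $c\in\mathbf{R}$ and every open $\mathcal{Q}(s)$-invariant neighborhood $\mathcal{N}$ of $K_c$ (possibly empty) there exist $\epsilon>0$ and a continuous equivariant map $\eta:X\to X$ with $\eta(E^{c+\epsilon}\setminus\mathcal{N})\subset E^{c-\epsilon}$.

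With this tool the criticality of $c_j$ is obtained by the usual contradiction. If $K_{c_j}=\emptyset$, apply the deformation with $\mathcal{N}=\emptyset$ and pick $A\in\mathcal{A}_j$ with $\sup_A E<c_j+\epsilon$. Then $\eta(A)$ is closed and invariant, $\sup_{\eta(A)} E\leq c_j-\epsilon$, and the mapping property (ii) gives $\ind\eta(A)\geq\ind A\geq j$. Hence $\eta(A)\in\mathcal{A}_j$, which violates the definition of $c_j$ as an infimum; therefore $c_j$ must be a critical value.

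For the multiplicity statement I would assume $c_i=c_j=c$ with $i<j$ (the case $i=j$ is the criticality just proved), and argue by contradiction that $\ind K_c\leq j-i$. The hypothesis $K_c\cap\fix\{\mathcal{Q}(s)\}=\emptyset$ together with Palais–Smale makes $K_c$ compact, so the continuity property (iv) furnishes a closed invariant neighborhood $N$ of $K_c$ with $\ind N=\ind K_c\leq j-i$. Pick an open invariant $N_1$ with $\overline{N_1}\subset N$, apply the deformation with $\mathcal{N}=N_1$, choose $A\in\mathcal{A}_j$ with $\sup_A E<c+\epsilon$, and set $B=\overline{A\setminus N_1}$. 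Since $A\subset B\cup N$, subadditivity (v) and monotonicity (iii) give $j\leq\ind A\leq\ind B+\ind N\leq\ind B+(j-i)$, so $\ind B\geq i$; but $B\subset E^{c+\epsilon}\setminus N_1$, hence $\eta(B)\subset E^{c-\epsilon}$ and mapping property (ii) yields $\eta(B)\in\mathcal{A}_i$, forcing $c_i\leq c-\epsilon$, a contradiction. The main obstacle I anticipate is precisely the equivariance of the deformation: the group $\mathbf{R}$ acting through $\mathcal{Q}(s)$ is non-compact, so the customary averaging used for $\mathbf{Z}_2$ or $S^1$ is unavailable; this is resolved by the fact that the action is already by isometries and that $E'$ is automatically equivariant, so the negative pseudo-gradient flow inherits equivariance without any averaging, and once this step is cleared the remainder of the proof is a purely abstract application of the five index axioms.
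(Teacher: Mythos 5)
Your overall architecture is the same as the paper's: verify the five index axioms (Lemma~\ref{p1}), prove an equivariant deformation lemma, then run the standard Ljusternik--Schnirelmann minimax argument where the estimate $j\leq\ind\Omega+\ind N$ together with $\ind\Omega\leq i-1$ (via the deformation and the definition of $c_i$) yields $\ind K_c\geq j-i+1$. Your added open set $N_1$ inside $N$ is harmless but superfluous --- the paper just uses the interior $U$ of the closed neighbourhood $N$ furnished by the continuity axiom --- and phrasing the multiplicity step as a contradiction rather than a direct inequality is a cosmetic difference.

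However, there is a genuine gap in the one place you flag as ``the main obstacle.'' You claim that because $\mathbf{R}$ is non-compact ``the customary averaging \dots\ is unavailable'' and that equivariance is ``resolved by the fact that \dots\ $E'$ is automatically equivariant, so the negative pseudo-gradient flow inherits equivariance without any averaging.'' This does not work: a pseudo-gradient vector field $w$ is \emph{not} $E'$ and is built via a locally finite partition of unity, which has no reason to be equivariant even when $E'$ is. Without forcing equivariance of $w$ by some means, the flow of $-w$ is simply not equivariant. The paper's Lemma preceding Lemma~\ref{lem3} resolves this precisely by an averaging --- the Bohr mean
$\nu(z)=\lim_{r\to\infty}\frac1r\int_0^r\mathcal{Q}(-s)\,w(\mathcal{Q}(s)z)\,\mathrm{d}s$ ---
which is well defined because $Q\in\mathrm{O}(2n)$ makes $s\mapsto\mathcal{Q}(-s)w(\mathcal{Q}(s)z)$ almost periodic (the orbit closures $\overline{\{\mathcal{Q}(s)z\}}$ are compact tori), and the same observation keeps the resulting $\nu$ locally Lipschitz. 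So averaging is not merely available here, it is the crux; your proposal both misidentifies why it should be avoidable and, in the earlier mention of an ``equivariant partition of unity,'' silently re-introduces it without saying how to produce one over a non-compact one-parameter group. Once this step is supplied correctly, the rest of your argument goes through and coincides with the paper's.
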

Before proving Theorem \ref{th3.1}, we need the concept of ``pseudo-gradient".
\begin{definition}\label{def3.3}
Assume $Y$ is a Banach space, $\varphi\in C^1(Y,\mathbf{R})$ and
$$\tilde{Y}=\{u\in Y:\varphi'(u)\neq0\}.$$
A pseudo-gradient vector field for $\varphi$ on $\tilde{Y}$ is a locally Lipschitz continuous mapping $\nu:\tilde{Y}\rightarrow Y$ such that, for every
$u\in\tilde{Y}$, one has
\begin{eqnarray*}
\|\nu(u)\|\leq2\|\varphi'(u)\|,
\end{eqnarray*}
\begin{eqnarray*}
\langle\nu(u),\varphi'(u)\rangle\geq\|\varphi'(u)\|^2.
\end{eqnarray*}
\end{definition}

We need the following lemmas.
\begin{lemma}\label{lem3.1}(see \cite{Mawhin}).
Under the assumption of Definition \ref{def3.3}, there exists a pseudo-gradient vector field for $\varphi$ on $\tilde{Y}$.
\end{lemma}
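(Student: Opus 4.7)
The plan is to build the pseudo-gradient field $\nu$ by a partition-of-unity gluing of locally constant candidates. First I would fix $u_0\in\tilde Y$ and produce a single vector $v_0\in Y$ that works as a pseudo-gradient value at $u_0$. Using the dual-norm identity $\|\varphi'(u_0)\|=\sup_{\|w\|=1}\langle\varphi'(u_0),w\rangle$, choose $w_0\in Y$ with $\|w_0\|=1$ and $\langle\varphi'(u_0),w_0\rangle>\tfrac{2}{3}\|\varphi'(u_0)\|$, and set $v_0=\tfrac{3}{2}\|\varphi'(u_0)\|\,w_0$. A direct check gives
$$\|v_0\|=\tfrac{3}{2}\|\varphi'(u_0)\|<2\|\varphi'(u_0)\|,\qquad \langle\varphi'(u_0),v_0\rangle>\|\varphi'(u_0)\|^2,$$
with both inequalities strict. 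The strictness is crucial for the next step.

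Second, since $\varphi\in C^1$ implies $\varphi'$ is norm-continuous, the strict inequalities at $u_0$ propagate to an open neighborhood: there exists an open $N_{u_0}\subset\tilde Y$ such that $\|v_0\|<2\|\varphi'(u)\|$ and $\langle\varphi'(u),v_0\rangle>\|\varphi'(u)\|^2$ for every $u\in N_{u_0}$. Doing this at every point yields an open cover $\{N_{u}\}_{u\in\tilde Y}$ of $\tilde Y$.

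Third, $\tilde Y$ is an open subset of the Banach space $Y$, hence a metric space and therefore paracompact by Stone's theorem. Choose a locally finite open refinement $\{M_i\}_{i\in I}$, with each $M_i\subset N_{u_i}$ for some $u_i$, and build a subordinate locally Lipschitz partition of unity by the explicit formula
$$\rho_i(u)=\frac{\operatorname{dist}(u,\tilde Y\setminus M_i)}{\sum_{j\in I}\operatorname{dist}(u,\tilde Y\setminus M_j)};$$
each $\rho_i$ is $1$-Lipschitz after normalization because distance functions are. Then set
$$\nu(u)=\sum_{i\in I}\rho_i(u)\,v_{u_i}.$$
The sum is locally finite, so $\nu$ is locally Lipschitz. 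For every $u\in\tilde Y$, $\nu(u)$ is a convex combination of vectors $v_{u_i}$, each of which satisfies the two strict pseudo-gradient inequalities at $u$; by convexity of $\|\cdot\|$ and linearity of $\langle\varphi'(u),\cdot\rangle$, the combination inherits $\|\nu(u)\|\le 2\|\varphi'(u)\|$ and $\langle\varphi'(u),\nu(u)\rangle\ge \|\varphi'(u)\|^2$.

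The main obstacle is the construction of a genuinely locally Lipschitz (not merely continuous) partition of unity on an open subset of an arbitrary Banach space; this is exactly what the distance-function formula above delivers, since $u\mapsto\operatorname{dist}(u,F)$ is $1$-Lipschitz for any closed $F$, and local finiteness makes the denominator locally a finite sum bounded away from zero. The only other delicate point is the factor $\tfrac{2}{3}$ in the selection of $w_0$: without this strict gap below $1$, continuity of $\varphi'$ would not suffice to extend the inequalities to a neighborhood, and the gluing step would collapse.
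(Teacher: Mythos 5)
The paper does not prove this lemma; it is stated as Lemma~\ref{lem3.1} with a citation to Mawhin and Willem, and no proof is given in the text. Your argument is the standard Palais construction, and it is the same proof that appears in the cited reference: pick a strictly admissible vector $v_{u}$ at each point using the dual-norm characterization with a gap factor in $(\tfrac12,1)$, propagate the two strict inequalities to a neighborhood by norm-continuity of $\varphi'$, use paracompactness (Stone) of the metric space $\tilde Y$ to obtain a locally finite refinement, build a locally Lipschitz partition of unity from distance functions to the closed complements, and observe that the two pseudo-gradient conditions are convex, hence preserved under the partition-of-unity average. The logic is sound. One small wording slip: the normalized functions $\rho_i$ are not $1$-Lipschitz; they are locally Lipschitz, which is what actually follows from the numerator being $1$-Lipschitz and the denominator being a locally finite sum that is locally bounded below by a positive constant (as you do correctly say in your final paragraph), and locally Lipschitz is all that is needed for $\nu$.
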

\begin{lemma}
Assume $E\in C^1(X,R)$ is $\mathcal{Q}(s)$-invariant, then there exists an equivariant pseudo-gradient vector filed for
$E$ on $\tilde{X}=\{z\in X: E'(z)\neq0\}$. That is, $v(\mathcal{Q}(s)z)=\mathcal{Q}(s)v(z)$ for every $z\in\tilde{X}$ and $s\in\mathbf{R}$.
\end{lemma}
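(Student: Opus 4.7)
The plan is to obtain $v$ by averaging the ordinary pseudo-gradient $\nu:\tilde X\to X$ supplied by Lemma \ref{lem3.1} over the $\mathcal{Q}(s)$-action. Since $\mathcal{Q}(s)$ acts on $X$ by linear isometries whose infinitesimal generator has purely imaginary discrete spectrum contained in $\sqrt{-1}\{(2\pi n+\theta_j)/T:n\in\mathbf{Z},\ 1\le j\le n\}$ (cf.\ \eqref{a2.4}), the orbit $s\mapsto\mathcal{Q}(s)z$ is almost periodic in $X$ for every $z\in X$; this is exactly the observation already exploited in the proof of Lemma \ref{p1}(iv). I therefore define
\begin{equation*}
v(z)=\lim_{R\to\infty}\frac{1}{R}\int_0^R\mathcal{Q}(-s)\nu(\mathcal{Q}(s)z)\,ds,
\end{equation*}
namely the Bohr mean of the pseudo-gradient pulled back to $z$ along its $\mathcal{Q}(s)$-orbit.

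The first step is to justify the existence of this limit in $X$. Continuity of $\nu$ together with the almost periodicity of $s\mapsto\mathcal{Q}(s)z$ makes $s\mapsto\nu(\mathcal{Q}(s)z)$ an almost periodic $X$-valued function; multiplication by the almost periodic operator family $\mathcal{Q}(-s)$ preserves almost periodicity, so the Ces\`aro mean converges in $X$ by the Bochner--Bohr theorem. Equivariance is then immediate from translation invariance of this mean: substituting $u=s+r$ in the integrand gives
\begin{equation*}
v(\mathcal{Q}(r)z)=\mathcal{Q}(r)\lim_{R\to\infty}\frac{1}{R}\int_r^{R+r}\mathcal{Q}(-u)\nu(\mathcal{Q}(u)z)\,du=\mathcal{Q}(r)v(z),
\end{equation*}
where the boundary contribution $\frac{1}{R}\int_0^r$ vanishes as $R\to\infty$.

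The pseudo-gradient inequalities are transferred from $\nu$ to $v$ using that $\mathcal{Q}(s)$ is an isometry on $X$ and its adjoint $\mathcal{Q}(-s)$ is an isometry on $X^*$, combined with the equivariance $E'(\mathcal{Q}(s)z)=E'(z)\circ\mathcal{Q}(-s)$ coming from $E\circ\mathcal{Q}(s)=E$; in particular $\|E'(\mathcal{Q}(s)z)\|=\|E'(z)\|$. Hence $\|\mathcal{Q}(-s)\nu(\mathcal{Q}(s)z)\|=\|\nu(\mathcal{Q}(s)z)\|\le 2\|E'(z)\|$ and
\begin{equation*}
\langle\mathcal{Q}(-s)\nu(\mathcal{Q}(s)z),E'(z)\rangle=\langle\nu(\mathcal{Q}(s)z),E'(\mathcal{Q}(s)z)\rangle\ge\|E'(z)\|^2,
\end{equation*}
and both estimates survive the mean by linearity and continuity of the duality pairing.

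The main obstacle, I expect, is local Lipschitz continuity of $v$. The argument should be: near any $z_0\in\tilde X$, all orbit closures $\overline{\{\mathcal{Q}(s)z:s\in\mathbf{R}\}}$ for $z$ in a small neighborhood of $z_0$ fit inside a common compact subset of $\tilde X$, on which $\nu$ is uniformly Lipschitz with some constant $L$. The required compactness of orbit closures is the standard Bohr-compactification statement for almost periodic orbits in Banach spaces, available here because the set of frequencies forms a finitely generated subgroup of $\mathbf{R}$ by \eqref{a2.4}. Once this is established, the isometry of $\mathcal{Q}(\pm s)$ and linearity of the mean together give $\|v(z_1)-v(z_2)\|\le L\|z_1-z_2\|$ in a neighborhood of $z_0$, completing the construction.
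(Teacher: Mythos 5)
Your proposal is correct and follows essentially the same route as the paper: both define $v$ as the Bohr/Ces\`aro mean $\lim_{R\to\infty}\frac{1}{R}\int_0^R\mathcal{Q}(-s)\nu(\mathcal{Q}(s)z)\,ds$ of the non-equivariant pseudo-gradient supplied by Lemma~\ref{lem3.1}, check the two pseudo-gradient inequalities by using that $\mathcal{Q}(s)$ is an isometry and that $E'(\mathcal{Q}(s)z)=E'(z)\circ\mathcal{Q}(-s)$, and obtain local Lipschitz continuity from the compactness of the orbit closure (hull) granted by almost periodicity. The only cosmetic difference is that you give a slightly more explicit justification for why the Ces\`aro mean exists (citing the Bochner--Bohr theorem) and for where the uniform Lipschitz constant comes from, whereas the paper simply invokes these facts.
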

\begin{proof}
By Lemme \ref{lem3.1}, there exists a pseudo-gradient vector field $w:\tilde{X}\rightarrow X$.
Now define $v:\tilde{X}\rightarrow X$ by
$$\nu(z)=\lim\limits_{r\rightarrow\infty}\frac{1}{r}\int_0^r\mathcal{Q}(-s)w(\mathcal{Q}(s)z)\mathrm{d}s.$$
It is easy to see that $\mathcal{Q}(-s)w(\mathcal{Q}(s)z)$ is almost periodic on $s$, and $\nu$ is well defined.
Then, for $\tau\in\mathbf{R}$, we have
\begin{align*}
\nu(\mathcal{Q}(\tau)z)&=\lim\limits_{r\rightarrow\infty}\frac{1}{r}\int_{0}^r\mathcal{Q}(-s)w(\mathcal{Q}(s+\tau)z)\mathrm{d}s\\
&=\mathcal{Q}(\tau)\lim\limits_{r\rightarrow\infty}\frac{1}{r}\int_{0}^r\mathcal{Q}(-s-\tau)w(\mathcal{Q}(s+\tau)z)\mathrm{d}s\\
&=\mathcal{Q}(\tau)\lim\limits_{r\rightarrow\infty}\frac{1}{r}\int_{0}^r\mathcal{Q}(-t)w(\mathcal{Q}(t)z)\mathrm{d}t\\
&=\mathcal{Q}(\tau)\nu(z).
\end{align*}
By a simple calculation, we obtain
\begin{align*}
\|\nu(z)\|\leq\sup\limits_{s\in\mathbf{R}}\|w(\mathcal{Q}(s)z)\|\leq2\sup\limits_{s\in\mathbf{R}}\|E'(\mathcal{Q}(s)z)\|=2\|E'(z)\|,
\end{align*}
\begin{align*}
\langle\nu(z),E'(z)\rangle&=\lim\limits_{r\rightarrow\infty}\frac{1}{r}\int_{0}^r\left\langle \mathcal{Q}(-s)w(\mathcal{Q}(s)z),E'(z)\right\rangle\mathrm{d}s\\
&=\lim\limits_{r\rightarrow\infty}\frac{1}{r}\int_{0}^r\left\langle w(\mathcal{Q}(s)z),E'( \mathcal{Q}(s)z)\right\rangle\mathrm{d}s\\
&\geq\lim\limits_{r\rightarrow\infty}\frac{1}{r}\int_{0}^r\|E'(\mathcal{Q}(s)z)\|^2\mathrm{d}s\\
&=\|E'(z)\|^2.
\end{align*}
Now it only remains to show $\nu$ is locally Lipschitz continuous.
For $z\in X$, denote $\Gamma=\{\mathcal{Q}(s)z:s\in\mathbf{R}\}$, then the closure $\overline{\Gamma}$ is the hull of $z$ and so is compact. It is easy to see that there exists a $\delta>0$ such that $w$ is Lipschitz continuous on
$$\Gamma_\delta=\{z\in X:\dist(z,\overline{\Gamma})<\delta\}.$$
Clearly, $\Gamma_\delta$ is $\mathcal{Q}(s)$-invariant, and for each $z_1,z_2\in\Gamma_\delta$ we have
\begin{align*}
\|\nu(z_1)-\nu(z_2)\|&\leq\lim\limits_{r\rightarrow\infty}\frac{1}{r}\int_{0}^r\|\mathcal{Q}(-s)(w(\mathcal{Q}(s)z_1)-w(\mathcal{Q}(s)z_2))\|\mathrm{d}s\\
&=\lim\limits_{r\rightarrow\infty}\frac{1}{r}\int_{0}^r\|(w(\mathcal{Q}(s)z_1)-w(\mathcal{Q}(s)z_2))\|\mathrm{d}s\\
&\leq L\lim\limits_{r\rightarrow\infty}\frac{1}{r}\int_{0}^r\|\mathcal{Q}(s)(z_1-z_2)\|\mathrm{d}s\\
&=L\|z_1-z_2\|,
\end{align*}
where $L$ is the Lipschitz constant of $w$ on $\Gamma_\delta$.
\end{proof}

\begin{lemma}\label{lem3}
Assume $E\in C^1(X,\mathbf{R})$ is $\mathcal{Q}(s)$-invariant and satisfies (P.-S.), $U$ is an open invariant neighbourhood of $K_c$. Then for each $\epsilon>0$, there exist $\varepsilon\in(0,\epsilon]$ and $\eta\in C([0,1]\times X,X)$,
such that
$$\eta(1,E^{c+\varepsilon}\setminus U)\subset E^{c-\varepsilon},$$
and for each $t\in[0,1]$,
$$\eta(t,z)=z,\ \ {\rm if}\ z\notin E^{-1}([c-\epsilon,c+\epsilon]).$$
Moreover,
$$\eta(t,\mathcal{Q}(s)z)=\mathcal{Q}(s)\eta(t,z),\ {\rm for\ each\ }z\in X,\ t\in[0,1],\ s\in\mathbf{R}.$$
\end{lemma}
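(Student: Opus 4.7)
The plan is to construct a $\mathcal{Q}(s)$-equivariant deformation via a negative pseudo-gradient flow truncated by invariant cutoffs, in the spirit of the classical Clark/Palais deformation lemma. First, two applications of (P.-S.): one to conclude that $K_c$ is compact, which together with openness of $U\supset K_c$ yields $\delta>0$ such that the invariant tube $\mathcal{N}_\delta:=\{z:\dist(z,K_c)<\delta\}$ lies inside $U$; a second, by the usual contradiction argument, to produce $\bar\varepsilon\in(0,\epsilon]$ and $b>0$ with $\|E'(z)\|\geq b$ on the invariant shell $\{|E(z)-c|\leq\bar\varepsilon,\ \dist(z,K_c)\geq\delta/2\}$.

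Second, I would invoke the equivariant pseudo-gradient lemma just established to obtain an equivariant, locally Lipschitz pseudo-gradient $\nu$ on $\tilde{X}=\{E'\neq 0\}$. Introduce the closed invariant sets $A:=\{z:|E(z)-c|\leq\bar\varepsilon\}\cap\{z:\dist(z,K_c)\geq\delta\}$ and $B:=\mathcal{N}_{\delta/2}\cup\{z:|E(z)-c|\geq\epsilon\}$; invariance is automatic because $E$, $K_c$, and the distance function are all $\mathcal{Q}(s)$-invariant (the last because $\mathcal{Q}(s)$ acts by $L^p$-isometries on $X$). The corresponding Urysohn function $g(z):=\dist(z,B)/(\dist(z,A)+\dist(z,B))$ is then continuous and invariant. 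Damping $\nu$ by a bounded radial cutoff $\chi$, say $\chi(t)=\min\{1,1/t\}$, set $V(z)=-g(z)\,\chi(\|\nu(z)\|)\,\nu(z)$. This vector field is equivariant, locally Lipschitz, globally bounded, and supported in $E^{-1}([c-\epsilon,c+\epsilon])$, so the ODE $\partial_t\eta=V(\eta)$ with $\eta(0,z)=z$ has a global continuous flow, which I take to be the desired $\eta$.

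Verification of the three clauses is then routine. Equivariance $\eta(t,\mathcal{Q}(s)z)=\mathcal{Q}(s)\eta(t,z)$ follows from uniqueness of ODE solutions together with equivariance of $V$; the identity-off-support clause is immediate since $V\equiv 0$ outside $E^{-1}([c-\epsilon,c+\epsilon])$. For the descent, note that on $A$ one has $g\equiv 1$ and the pseudo-gradient inequalities give $\tfrac{d}{dt}E(\eta)\leq -\chi(\|\nu(\eta)\|)\|E'(\eta)\|^2$, which is bounded above by a strictly negative constant depending only on $b$; choosing $\varepsilon\in(0,\bar\varepsilon]$ small enough that this uniform descent rate exceeds $2\varepsilon$, any trajectory starting in $E^{c+\varepsilon}\setminus U$ either drops below level $c-\varepsilon$ before leaving $A$, or would have to enter $\mathcal{N}_{\delta/2}$; the latter is impossible because $U\supset\mathcal{N}_\delta$ and the starting point sits outside $U$, so the flow cannot cross the annulus $\mathcal{N}_\delta\setminus\mathcal{N}_{\delta/2}$ without first traversing $A$.

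The single substantive obstacle is equivariance under the noncompact group $\mathcal{Q}(s)$: a naive ``average over the group'' for the pseudo-gradient would not converge. That difficulty has already been dealt with by the Bohr/almost-periodic mean used in the preceding lemma, after which the isometric nature of the action propagates invariance to every subsequent ingredient (the sets $A$, $B$, the distance, the Urysohn cutoff $g$, and hence $V$ and $\eta$) for free; no further averaging is needed.
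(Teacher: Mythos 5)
The overall scaffolding here is right and is essentially the paper's: use (P.-S.) to get a uniform lower bound on $\|E'\|$ in a band away from $K_c$, take the equivariant pseudo-gradient, cut it off with a $\mathcal{Q}(s)$-invariant Urysohn function, integrate the resulting bounded Lipschitz vector field, and let uniqueness of the flow transfer equivariance to $\eta$. The identity-off-support clause and the equivariance clause are handled correctly.

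The gap is in the descent clause, specifically in the claimed dichotomy ``either drops below $c-\varepsilon$ before leaving $A$, or would have to enter $\mathcal{N}_{\delta/2}$.'' There is a third possibility you do not rule out: the trajectory leaves $A$ by crossing $\dist(\cdot,K_c)=\delta$, then spends the remainder of the unit time interval in the annulus $\mathcal{N}_\delta\setminus\mathcal{N}_{\delta/2}$, never entering $\mathcal{N}_{\delta/2}$ and never dropping the energy by $2\varepsilon$. On that annulus your cutoff $g$ lies strictly between $0$ and $1$ and can be arbitrarily small (and it is not only the distance to $\mathcal{N}_{\delta/2}$ that degrades it; $\dist(\cdot,\{|E-c|\ge\epsilon\})$ has no a priori lower bound without a Lipschitz estimate on $E$), so the descent rate $-g\,\chi(\|\nu\|)\|E'\|^2$ is not bounded away from $0$ there. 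The statement ``the flow cannot cross the annulus without first traversing $A$'' is not a justification: the trajectory already starts on the boundary region of $A$ and leaves it by passing into the annulus, not by re-entering $A$; and a flow that merely approaches $\mathcal{N}_{\delta/2}$ asymptotically (which is all your Gronwall-type argument excludes) can still stall there.

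The paper avoids exactly this issue by a different choice of cutoff sets and a time rescaling: it takes $A=E^{-1}([c-2\varepsilon,c+2\varepsilon])\cap(U^C)_{2\sqrt\varepsilon}$ and $B=E^{-1}([c-\varepsilon,c+\varepsilon])\cap(U^C)_{\sqrt\varepsilon}$, with cutoff $\psi$ equal to $1$ on $B$ and $0$ on $A^C$, and then runs the unit-speed flow for time $\sqrt\varepsilon$. Because the speed is $\le 1$ and the initial point lies in $U^C$, the whole trajectory stays inside $(U^C)_{\sqrt\varepsilon}$; if in addition the energy has not yet dropped below $c-\varepsilon$, the trajectory sits in $B$, where $\psi\equiv 1$ and the gradient bound $\|E'\|\ge 4\sqrt\varepsilon$ gives a uniform descent rate $\ge 2\sqrt\varepsilon$. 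Integrating over the time interval of length $\sqrt\varepsilon$ yields exactly a $2\varepsilon$ drop. To repair your version you would need to do something analogous: either enlarge the region where $g\equiv1$ to a set whose $\delta$-neighbourhood still carries the gradient bound, and simultaneously shrink $\varepsilon$ so the total arclength $\le 2\varepsilon/b'$ keeps the trajectory inside that region, and then separately furnish a uniform positive lower bound for $\dist(\cdot,\{|E-c|\ge\epsilon\})$ on the relevant sublevel band (which is not automatic and is precisely what the paper's nested $E$-bands buy you for free). One smaller point: you need $\bar\varepsilon<\epsilon$ strictly, otherwise $A$ and $\overline B$ may intersect on $\{|E-c|=\epsilon\}$ and the Urysohn quotient is not defined there.
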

\begin{proof}
Firstly we claim that for each given $\epsilon>0$, there exists a $\varepsilon\in(0,\frac{\epsilon}{2}]$, such
that if $z\in E^{-1}([c-2\varepsilon,c+2\varepsilon])\cap (U^C)_{2\sqrt{\varepsilon}}$, then
\begin{align}\label{3.4}
\|E'(z)\|\geq4\sqrt{\varepsilon},
\end{align}
where $U^C$ denotes the complement of $U$, and $(U^C)_{2\sqrt{\varepsilon}}$ is the $2\sqrt{\varepsilon}$ neighbourhood of
$U^C$. If such $\varepsilon$ does not exist, there is a sequence $\{z_k\}$ such that
$$z_k\in(U^C)_{\frac{2}{\sqrt{k}}},\ c-\frac{2}{k}\leq E(z_k)\leq c+\frac{2}{k},$$
and
$$\|E'(z_k)\|<\frac{4}{\sqrt{k}}.$$
Since $E$ satisfies (P.-S.), it has a convergent subsequence, without loss of generality, still denoted by $\{z_k\}$.
Assume $\lim\limits_{k\rightarrow\infty}z_k=z$, then
$$E(z)=c,\ \ E'(z)=0,$$
and $z\in K_c\cap U^C=\emptyset$, a contradiction.

Now, let
$$A=\{z\in X:z\in E^{-1}([c-2\varepsilon,c+2\varepsilon])\cap (U^C)_{2\sqrt{\varepsilon}}\},$$
$$B=\{z\in X:z\in E^{-1}([c-\varepsilon,c+\varepsilon])\cap (U^C)_{\sqrt{\varepsilon}}\}\subset A,$$
$$\psi(z)=\frac{\dist(z,A^C)}{\dist(z,A^C)+\dist(z,B)}.$$
Then
$0\leq \psi(z)\leq1,$
and $\psi(z)=1$ if $z\in B$, $\psi(z)=0$ if $z\in A^C$.
Define a continuous function $g$ on $\mathbf{R}^{2n}$,
\begin{equation}
g(z)=\bigg\{
\begin{array}{ccc}
-\psi(z)\frac{\nu(z)}{\|\nu(z)\|},\ {\rm if}\ z\in A,\\
0,\ \ \ \ \ \ \ \ \ \ \ \ \ \ \  {\rm if}\ z\notin A,
\end{array}
\end{equation}
where $\nu(z)$ is an equivariant pseudo-gradient vector filed for $E$. Since $g$ is locally Lipschitz continuous and bounded, the following Cauchy problem has a unique solution $x(\cdot,z)$ defined on $[0,\infty)$:
\begin{equation}\label{3.6}
\bigg\{
\begin{array}{ccc}
x'=g(x),\\
x(0)=z.
\end{array}
\end{equation}
Let
\begin{align*}
\eta(t,z)=x(\sqrt\varepsilon t,z),\ \ t\in[0,1].
\end{align*}
Then it is easy to see that $\eta(\cdot,\cdot)$ is continuous. Since
\begin{align*}
\|x(t,z)-z\|=\|\int_0^tg(x(s,z))\mathrm{d}s\|\leq\int_0^t\|g(x(s,z))\|\mathrm{d}s\leq t,
\end{align*}
we have
$$x(t,U^C)\subset(U^C)_{\sqrt\varepsilon}\ \ \ \forall t\in[0,\sqrt\varepsilon].$$
By the definition of $g$, for $t\in[0,\sqrt\varepsilon]$ and $x(t,z)\in A$, we obtain
\begin{align*}
\frac{\mathrm{d}}{\mathrm{d}t}E(x(t,z))&=\langle E'(x(t,z)),g(x(t,z))\rangle\\
&=\left\langle E'(x(t,z)),-\psi(x(t,z))\frac{\nu(x(t,z))}{\|\nu(x(t,z))\|}\right\rangle\\
&=\frac{-\psi(x(t,z))}{\|\nu(x(t,z))\|}\left\langle E'(x(t,z)),\nu(x(t,z))\right\rangle\\
&\leq\frac{-\psi(x(t,z))}{\|\nu(x(t,z))\|}\|E'(x(t,z)))\|^2\\
&\leq0.
\end{align*}
For $z\in E^{c+\varepsilon}\setminus U$, if $E(x(\tau,z))<c-\varepsilon$ for some $\tau\in[0,\sqrt\varepsilon]$,
then $E(x(\sqrt\varepsilon,z))<c-\varepsilon$, and $\eta(1,z)\in E^{c-\varepsilon}$. If such $\tau\in[0,\sqrt\varepsilon]$ doesn't exist, then
$$x(t,z)\in B\ \ \ \forall t\in[0,\sqrt\varepsilon].$$
From \eqref{3.4}, we have
\begin{align*}
E(x(\sqrt{\varepsilon},z))&=E(z)+\int_0^{\sqrt\varepsilon}\frac{\mathrm{d}}{\mathrm{d}t}E(x(t,z))\mathrm{d}t\\
&=E(z)+\int_0^{\sqrt\varepsilon}\langle E'(x(t,z)),g(x(t,z))\rangle\mathrm{d}t\\
&=E(z)+\int_0^{\sqrt\varepsilon}\left\langle E'(x(t,z)),-\frac{\nu(x(t,z))}{\|\nu(x(t,z))\|}\right\rangle\mathrm{d}t\\
&\leq c+\varepsilon-\int_0^{\sqrt\varepsilon}\frac{\|E'(x(t,z))\|^2}{\|\nu(x(t,z))\|}\mathrm{d}t\\
&\leq c-\varepsilon.
\end{align*}
Then $\eta(1,z)\in E^{c-\varepsilon}$, and $\eta(1,E^{c+\varepsilon}\setminus U)\subset E^{c-\varepsilon}$.
It is easy to see that $\psi(\mathcal{Q}(s)z)=\psi(z)$, which yields that $g(\mathcal{Q}(s)z)=\mathcal{Q}(s)g(z)$.
Then,
\begin{align*}
\eta(t,\mathcal{Q}(s)z)=x(\sqrt\varepsilon t,\mathcal{Q}(s)z)&=\mathcal{Q}(s)z+\int_0^{\sqrt\varepsilon t}g(x(\tau,\mathcal{Q}(s)z))\mathrm{d}\tau\\
&=\mathcal{Q}(s)z+\mathcal{Q}(s)\int_0^{\sqrt\varepsilon t}g(\mathcal{Q}(-s)x(\tau,\mathcal{Q}(s)z))\mathrm{d}\tau.\\
\end{align*}
Thus, we have
\begin{align*}
\mathcal{Q}(-s)\eta(t,\mathcal{Q}(s)z)=\mathcal{Q}(-s)x(\sqrt\varepsilon t,\mathcal{Q}(s)z)=z+\int_0^{\sqrt\varepsilon t}g(\mathcal{Q}(-s)x(\tau,\mathcal{Q}(s)z))\mathrm{d}\tau.
\end{align*}
Since the solution of \eqref{3.6} is unique, we get
$$\mathcal{Q}(-s)x(t,\mathcal{Q}(s)z)=x(t,z),$$
and thus
$$\eta(t,\mathcal{Q}(s)z)=\mathcal{Q}(s)\eta(t,z),\ \ {\rm for\ each}\ t\in[0,1],\ s\in\mathbf{R}.$$
\end{proof}

Now we give the proof of Theorem \ref{th3.1}.
\begin{proof}[Proof of Theorem \ref{th3.1}]

Assume $c_i=c_j=c$ for some $1\leq i\leq j\leq n$. By Lemma \ref{p2}, it suffices to prove that
$$\ind K_c\geq j-i+1.$$
Since $E$ satisfies (P.-S.) and $\mathcal{Q}(s)$-invariant, we see $K_c$ is compact and $\mathcal{Q}(s)$-invariant. By the continuity property of index, there exists a closed $\mathcal{Q}(s)$-invariant neighbourhood $N$ of $K_c$
such that
$$\ind N=\ind K_c,$$
and the interior $U$ of $N$ is an open $\mathcal{Q}(s)$-invariant neighbourhood of $K_c$. Take $A\in\mathcal{A}_j$
such that
$$\sup\limits_{A}E\leq c+\varepsilon,$$
and denote $\Omega=A\setminus U$.
By the monotonicity and subadditivity properties of index, we obtain
\begin{align*}
j\leq\ind A\leq\ind(\Omega\cup N)\leq\ind\Omega+\ind N=\ind \Omega+\ind K_c.
\end{align*}
Now we apply Lemma \ref{lem3} with $\Lambda=\eta(1,\Omega)\subset E^{c-\varepsilon}$. Then $\Lambda$ is $\mathcal{Q}(s)$-invariant and
$$\sup\limits_{\Lambda}E\leq c-\varepsilon.$$
By the definition of $c_i=c$, one has
$$\ind \Lambda\leq i-1.$$
Since $\eta(1,\cdot)$ is equivariant, by the mapping property of index, we have
$$\ind \Omega\leq\ind\Lambda\leq i-1.$$
Then
$$\ind K_c\geq j-\ind\Omega\geq j-i+1.$$
\end{proof}

Let
\begin{equation}
\tilde{\theta}_j=\bigg\{
\begin{array}{ccc}
&\theta_j,\ \ {\rm if}\ \theta_j\neq0,\\
&2\pi,\ \  {\rm if}\ \theta_j=0,
\end{array}\ \ \ \ \ \ \ 1\leq j\leq n.
\end{equation}
Clearly, $0<\tilde{\theta}_j\leq2\pi$, without loss of generality, let's assume that
\begin{eqnarray}
\tilde{\theta}_1\leq\tilde{\theta}_2\leq\cdots\leq\tilde{\theta}_n.
\end{eqnarray}
For $1\leq j\leq n$, let
\begin{equation*}
\begin{split}
M_{j}(t)&=\left(
\begin{array}{ccc}
\cos \frac{\tilde{\theta}_j}{T}t&\sin \frac{\tilde{\theta}_j}{T}t\\
-\sin \frac{\tilde{\theta}_j}{T}t&\cos \frac{\tilde{\theta}_j}{T}t
\end{array}
\right),
\end{split}\ \ t\in\mathbf{R}.
\end{equation*}

The following result is useful to us.
\begin{theorem}\label{th2}
Let $M=\{Q(t)\xi:\xi\in\mathbf{R}^{2n},\ |\xi|=1\}\subset X$, where
$$Q(t)=P\diag\{M_{1}(t),\cdots,M_{n}(t)\}P^\top$$
and $P$ is given by \eqref{a2.3}. Then
$M$ is $\mathcal{Q}(s)$-invariant and $\ind (M)=n$.
\end{theorem}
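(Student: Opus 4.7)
The proof splits into three tasks: verifying $\mathcal Q(s)$-invariance of $M$, producing an explicit $\Phi$ to obtain $\ind(M)\le n$, and proving the matching lower bound $\ind(M)\ge n$, which I expect to be the main obstacle.

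For invariance and the upper bound, I would use that the planar identity $M_j(t+s)=M_j(t)M_j(s)$ together with the commuting block structure gives $Q(t+s)=Q(t)\hat Q(s)$, where $\hat Q(s):=P\diag(M_1(s),\dots,M_n(s))P^\top\in \mathrm{O}(2n)$, and in particular $\hat Q(T)=Q$ because $\tilde\theta_j\equiv\theta_j\pmod{2\pi}$. Hence for $x(t)=Q(t)\xi\in M$ we have $(\mathcal Q(s)x)(t)=Q(t)(\hat Q(s)\xi)\in M$ since $|\hat Q(s)\xi|=|\xi|=1$, and also $M\subset X$. Setting $\eta=P^\top\xi$, I then define
$$\Phi_j\bigl(Q(\cdot)\xi\bigr)=\eta_{2j-1}-i\eta_{2j},\qquad 1\le j\le n,$$
which is nowhere zero on $M$ by orthogonality of $P$, and a direct computation with the block $M_j(s)$ gives $\Phi_j(\mathcal Q(s)x)=e^{i\tilde\theta_j s/T}\Phi_j(x)$. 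Since $\tilde\theta_j=\theta_j$ when $\theta_j>0$ (take $n_j=0$, $p_j=j$) and $\tilde\theta_j=2\pi=2\pi\cdot 1+\theta_j$ when $\theta_j=0$ (take $n_j=1$, $p_j=j$), each character has the required form $2\pi n_j+\theta_{p_j}\ne 0$, yielding $\ind(M)\le n$.

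For the lower bound I would argue by contradiction: assume $\Phi=(\Phi_1^\top,\dots,\Phi_m^\top)^\top\in C(M,\mathbf C^k\setminus\{0\})$ realizes the index with $k<n$. Pulling back by the homeomorphism $\xi\mapsto Q(\cdot)\xi$, the task reduces to showing that no continuous $\hat Q(\mathbf R)$-equivariant map $\tilde\Phi:S^{2n-1}\to\mathbf C^k\setminus\{0\}$ with characters $\beta_j=2\pi n_j+\theta_{p_j}\ne 0$ exists for $k<n$. I would use the invariant filtration
$$S^1_1\subset S^3_2\subset\cdots\subset S^{2n-1}_n=S^{2n-1},\qquad S^{2j-1}_j:=\{\eta:\eta_{2l-1}=\eta_{2l}=0\text{ for }l>j\},$$
and prove by induction on $j$ that $\ind(S^{2j-1}_j)\ge j$. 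The inductive step restricts $\tilde\Phi$ to each invariant circle $V_l=\{|\eta_l|=1,\ \eta_{l'}=0\ (l'\ne l)\}\subset S^{2j-1}_j$: on $V_l$ the equivariance forces each non-vanishing scalar component $\tilde\Phi_r$ to satisfy $\tilde\Phi_r(\phi)=e^{i\beta_r\phi/\tilde\theta_l}\tilde\Phi_r(0)$, whose well-definedness on $S^1$ requires $\beta_r/\tilde\theta_l\in\mathbf Z$; combined with subadditivity, monotonicity, and a degree/winding argument on the layer $S^{2j-1}_j\setminus S^{2(j-1)-1}_{j-1}$ (which retracts onto an orbit torus of $\hat Q$ acting with speed $\tilde\theta_j$), this upgrades the index bound from $j-1$ to $j$.

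The principal difficulty is that when the speeds $\tilde\theta_1,\dots,\tilde\theta_n$ are rationally independent the quotient $S^{2n-1}/\hat Q(\mathbf R)$ is not a manifold, so the classical Fadell--Rabinowitz $S^1$-cohomological index does not apply verbatim; the character normalization $\beta=2\pi n+\theta_p$ is precisely the device that replaces quotient compactness with per-orbit lifting obstructions, allowing the induction to close.
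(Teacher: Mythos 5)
Your upper bound matches the paper's: you pass to the complex diagonalization of the block structure and read off the characters $\tilde\theta_j$, getting $\ind(M)\le n$. The lower bound, however, is where the real content lies, and your sketch diverges from the paper's argument and has genuine gaps. The paper groups the $n$ coordinate $2$-planes by rational commensurability of the angles $\tilde\theta_j$ (equivalently of the periods $T_j$): within each class the one-parameter action $\hat Q(s)$ is periodic, hence an honest $S^1$-action on the corresponding sub-sphere. Because the periods across distinct classes are incommensurable, each component $\Phi_a$ of a hypothetical $\Phi\in C(M,\mathbf C^q\setminus\{0\})$ is compatible with exactly one class; pigeonhole over $q<n$ then produces a block where the $\Phi$-components are strictly fewer than the complex dimension of the sphere in that block, and the $S^1$ Borsuk--Ulam lemma (Lemma \ref{lemM}, quoting Mawhin--Willem Theorem 5.5) forces a zero there, contradiction.

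Your filtration-plus-circle argument does not reproduce this. First, the per-circle constraint $\beta_r/\tilde\theta_l\in\mathbf Z$ does not assign each $\Phi_r$ to a unique coordinate: if $\tilde\theta_l$ and $\tilde\theta_{l'}$ are commensurable (e.g.\ equal, or one a multiple of the other), a single character is compatible with both $V_l$ and $V_{l'}$, so the pigeonhole over circles fails. The paper's commensurability grouping exists precisely to handle this, and your proposal has no substitute for it. Second, the layer $S^{2j-1}_j\setminus S^{2j-3}_{j-1}$ equivariantly deformation-retracts onto the single circle $V_j$, not onto any orbit torus, so the topological picture you invoke is off. Third, and most fundamentally, a winding-number argument on circles only detects obstructions one complex dimension at a time; it cannot exclude a nonvanishing $\mathbf C^{k-1}$-valued equivariant map on $S^{2k-1}$, which is exactly what the Borsuk--Ulam-type lemma does on each commensurable block. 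Without the grouping and without a genuinely higher-dimensional equivariant obstruction result, the inductive step $\ind(S^{2j-1}_j)\ge j$ is not established. The character normalization you single out as the ``device'' is necessary for Definition \ref{d1} to make sense but does not by itself replace the $S^1$-cohomological input.
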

To prove the theorem, we need the following lemma about $S^1$ action on $\mathbf{R}^{2k}$.
\begin{lemma}[See Theorem 5.5 of \cite{Mawhin}]\label{lemM}
Let $\{T(\theta)\}_{\theta\in S^1}$ be an action of $S^1$ over $\mathbf{R}^{2k}$ such that $\fix(S^1)=\{0\}$ and let $D$ be an open
bounded invariant neighbourhood of $0$. If $\Phi\in C(\partial D,\mathbf{C}^{k-1})$  and $m\in \mathbf{Z}\setminus\{0\}$ with
$$\Phi(T(\theta)z)=e^{\sqrt{-1}m\theta}\Phi(z),\ \ \theta\in S^1,\ \ z\in\partial D,$$
then $0\in\Phi(\partial D)$.
\end{lemma}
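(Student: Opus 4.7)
The plan is to argue by contradiction using the cohomological $S^1$-index (the Fadell--Rabinowitz index). Suppose $\Phi(z)\neq0$ for every $z\in\partial D$; then $\Phi$ becomes a continuous $S^1$-equivariant map
$$\Phi:\partial D\longrightarrow \mathbf{C}^{k-1}\setminus\{0\},$$
where $S^1$ acts on the target by weight-$n$ multiplication $w\mapsto e^{\sqrt{-1}n\theta}w$. The goal is to compute the $S^1$-indices of the source and target and derive a dimensional contradiction via monotonicity.

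Since the action of $S^1$ on $\mathbf{R}^{2k}$ is linear (implicit in this ambient setup) and $\fix(S^1)=\{0\}$, the representation decomposes into $k$ nontrivial complex one-dimensional summands with nonzero integer weights $m_1,\ldots,m_k$. Choosing an $S^1$-invariant inner product (which exists because $S^1$ is compact), the radial projection $z\mapsto z/|z|$ gives an equivariant map $\partial D\to S^{2k-1}\subset\mathbf{C}^k$, and the radial rescaling $\xi\mapsto\lambda(\xi)\xi$, with $\lambda(\xi)=\sup\{t>0:t\xi\in\overline D\}$, furnishes an equivariant map in the opposite direction (continuous after replacing $D$, if necessary, with a slightly shrunken invariant star-shaped neighborhood of $0$ that has the same index). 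Monotonicity in both directions therefore yields $\ind_{S^1}(\partial D)=\ind_{S^1}(S^{2k-1})$.

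Next I would compute the two indices. For the source, the classical Borel equivariant cohomology calculation gives $\ind_{S^1}(S^{2k-1})=k$: the class $\alpha^{k-1}$ survives in $H^{*}_{S^1}(S^{2k-1})$ while $\alpha^{k}$ does not, where $\alpha\in H^2(BS^1)$ is the generator. For the target, $\mathbf{C}^{k-1}\setminus\{0\}$ equivariantly deformation retracts via $w\mapsto w/|w|$ to $S^{2k-3}$ carrying the weight-$n$ action, whose index is $k-1$ by the same calculation. Monotonicity of the index under the equivariant map $\Phi$ then forces
$$k=\ind_{S^1}(\partial D)\;\le\;\ind_{S^1}(\mathbf{C}^{k-1}\setminus\{0\})=k-1,$$
which is the required contradiction.

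The principal obstacle is the careful treatment of the two index computations when the weights $m_j$ and $n$ are not primitive, because the $S^1$-actions on $\partial D$ and on the target may have nontrivial finite isotropy groups. In that situation the orbit space is not simply $\mathbf{C}P^{k-1}$ or a lens space, and one must identify the surviving powers of $\alpha$ by computing the Euler class of the associated $S^1$-bundle (a nonzero integer multiple of $\alpha$ on each linear summand). This can be done directly on a CW model obtained by collapsing $S^1$-orbits, or equivalently by invoking the multiplicativity of the Euler class under the splitting of the representation into weight spaces. Once these two index values are pinned down, the monotonicity step and the contradiction are immediate.
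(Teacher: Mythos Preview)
The paper does not supply its own proof of this lemma; it simply cites Theorem~5.5 of Mawhin--Willem, whose argument is degree-theoretic (an equivariant Borsuk--Ulam step: extend $\Phi$ equivariantly over $\overline{D}$, augment by one further equivariant coordinate to obtain a map $\overline{D}\to\mathbf{R}^{2k}$, and show its Brouwer degree at $0$ is nonzero). Your route via the Fadell--Rabinowitz cohomological index is therefore genuinely different, and the endgame---$\ind_{S^1}(S^{2k-1})=k$, $\ind_{S^1}(S^{2k-3})=k-1$, monotonicity---is correct. Incidentally, the issue you flag as the ``principal obstacle'' (non-primitive weights and finite isotropy) is harmless over~$\mathbf{Q}$: each weight-$m$ complex line contributes Euler class $m\alpha$, a nonzero multiple of the generator, so the two index values are exactly $k$ and $k-1$ regardless of the weights.

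The real gap is earlier, in your identification $\ind_{S^1}(\partial D)=\ind_{S^1}(S^{2k-1})$. The radial projection $z\mapsto z/|z|$ gives only $\ind_{S^1}(\partial D)\le k$, which is the wrong inequality for the contradiction. For the needed lower bound you invoke the inverse map $\xi\mapsto\lambda(\xi)\xi$ with $\lambda(\xi)=\sup\{t>0:t\xi\in\overline{D}\}$, but $\lambda$ is in general only upper semicontinuous: e.g.\ with the diagonal $S^1$-action on $\mathbf{C}^{2}$ and $D=B_{1}(0)\cup\{1.5<|z_{1}|<2,\ |z_{2}|<\varepsilon\}$, the function $\lambda$ jumps from roughly $1.5$ down to $1$ as the ray leaves the thin invariant handle, so the map is discontinuous. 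Your parenthetical remedy---replace $D$ by a star-shaped neighbourhood ``that has the same index''---is circular, since that equality of indices is exactly what is in question, and there is no equivariant map between $\partial D'$ and $\partial D$ to transport the bound. The inequality $\ind_{S^1}(\partial D)\ge k$ does hold, but establishing it for general $D$ requires either the equivariant long exact sequence of the pair $(\overline{D},\partial D)$ together with an Alexander/\v{C}ech-duality computation (using that $\overline{D}$ is equivariantly contractible and $\partial D$ separates $0$ from infinity), or simply the degree argument of Mawhin--Willem. For the record, in the paper's sole application of this lemma (the proof of Theorem~\ref{th2}) the relevant boundary is a round sphere, so your star-shaped argument would suffice there.
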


Now we give the proof of
Theorem \ref{th2}.
\begin{proof}[Proof of Theorem \ref{th2}]
Take unitary matrix $\bar{P}$
such that
$$\bar{P}^{-1}Q(t)\bar{P}=\diag\left(e^{\frac{\sqrt{-1}\tilde{\theta}_{1}}{T}t},e^{-\frac{\sqrt{-1}\tilde{\theta}_{1}}{T}t},\cdots,e^{\frac{\sqrt{-1}\tilde{\theta}_{n}}{T}t},e^{-\frac{\sqrt{-1}\tilde{\theta}_{n}}{T}t}\right).$$
For each $z=Q(t)\xi\in M$ with $|\xi|=1$, denote
$$y=(y_{1,1},y_{1,2},\cdots, y_{n,1},y_{n,2})^\top=\bar{P}^{-1}\xi,$$
where $y_{j,i}\in\mathbf{C}$ for $1\leq j\leq n$ and $i=1,2$.
Let $\Psi_j(z)=y_{j,1}$ for $1\leq j\leq n$ and
$$\Psi(z)=(\Psi_1(z),\Psi_2(z)\cdots,\Psi_n(z))^\top.$$
Then $\Psi\in C(M, \mathbf{C}^{n}\setminus\{0\})$ and
$$\Psi_j(\mathcal{Q}(s)z)=e^{\frac{\sqrt{-1}\tilde{\theta}_{j}}{T}s}\Psi_j(z),\ \ \ 1\leq j\leq n.$$
It follows that $\ind(M)\leq n$.
Assume $\ind(M)=q<n$. Then there exists a mapping
\begin{align}\label{002.17}
\Phi=(\Phi_{1}^\top,\cdots,\Phi_{m}^\top)^\top\in C( M,\mathbf{C}^q\setminus\{0\})
\end{align}
such that $\Phi_j=(\Phi_{j,1},\cdots,\Phi_{j,q_j})^\top\in C(M,\mathbf{C}^{q_j})$ with $q_1+\cdots q_m=q$ and
$$\Phi_j(\mathcal{Q}(s)x)=e^{\frac{\sqrt{-1}(2k_j\pi+\theta_{r_j})}{T}s}\Phi_j(x)$$
for $1\leq j\leq m,\ k_j\in\mathbf{Z},\ 1\leq r_j\leq n$, $2k_j\pi+\theta_{r_j}\neq0$. Rewrite $(z_1,\cdots,z_n)=(\tilde{z}_1,\cdots,\tilde{z}_l)$
such that
$$\mathcal{Q}(s+T_j)\tilde{z}_j=\mathcal{Q}(s)\tilde{z}_j\ \ \forall s\in\mathbf{R},\ \ 1\leq j\leq l,$$
where $T_j>0$ is a constant and
\begin{align}\label{3.16}
\tilde{k}_1T_j+\tilde{k}_2T_r\neq0\ \  \forall j\neq r, \ \ \ \tilde{k}_1,\ \tilde{k}_2\in\mathbf{Z}\setminus\{0\}.
\end{align}
Then for any $x^j=(\tilde{x}^j_1,\cdots,\tilde{x}^j_l)\in M$ with $\tilde{x}^j_a=0$, $1\leq a\leq l$, $a\neq j$, one has
$$\mathcal{Q}(s+T_j)x^j=\mathcal{Q}(s)x^j,\ \ \ 1\leq j\leq l.$$
Since
$$\Phi_a(\mathcal{Q}(T_j)x^j)=\Phi_a(x^j)=e^{\frac{\sqrt{-1}(2k_a\pi+\theta_{r_a})}{T}T_j}\Phi_a(x^j)\ \ {\rm for}\ \ 1\leq a\leq m,$$
one has
$$\Phi_a(x^j)=0,\ \ \  {\rm if}\ \frac{(2k_a\pi+\theta_{r_a})T_j}{T}\neq2\pi k\ \ \ \forall k\in\mathbf{Z}.$$
Since $\Phi_a$ not always vanishes,  there exists a $1\leq j_a\leq l$ such that
$$\frac{(2k_a\pi+\theta_{r_a})T_{j_a}}{T}=2\pi n_a$$
for some $n_a\in\mathbf{Z}\backslash\{0\}$. Now we put the $\Phi_j$ whose period are rational commensurable  together and adjust their powers so that the periods are the same.
Let $\sigma_1,\cdots,\sigma_m$ be a rearrangement of $1,\cdots,m$,
$$\Upsilon=(\Phi^{h_1}_{\sigma_1},\cdots,\Phi^{h_m}_{\sigma_m})^\top=(\Upsilon_1^\top,\cdots,\Upsilon_l^\top)^\top$$
such that
$\Upsilon_j(\mathcal{Q}(s)x)=e^{\sqrt{-1}\alpha_j s}\Upsilon_j(x),$
$\alpha_j T_j=2\beta_j\pi$ for some $\beta_j\in\mathbf{Z}\setminus\{0\}$, $1\leq j\leq l$,
where $\Phi_k^{h_k}=(\Phi_{k,1}^{h_k},\cdots,\Phi_{k,q_k}^{h_k})^\top$, $h_k\in\mathbf{N}_+$ for $1\leq k\leq m$.
Then $\Upsilon_j(x^r)=0$ if $j\neq r$.
It follows from $q<n$ and \eqref{3.16} that there must exist a $1\leq j\leq m$ such that the dimension of $\Upsilon_j$
is less than the dimension of $x^{j}$. By Lemma \ref{lemM}, there exists an $x^{j}_0\in M$ such that $\Upsilon_j(x^j_0)=0$.
Then $\Upsilon(x^j_0)=0$ and  $\Phi(x^{j}_0)=0$,
which contradicts \eqref{002.17}, proving the theorem.
\end{proof}

\section{Proof of main results}

Since $C$ is strictly convex, for every $z\in S$, there exist a unique $\zeta\in S^{2n-1}$,
and a real number $r(\zeta)>0$ such that $z=r(\zeta)\zeta$. Since $H\in C^1(\mathbf{R}^{2n},\mathbf{R})$, the implicit function theorem implies that
$r(\zeta)$ is continuously differentiable. Note that
$$Qz=r(Q\zeta)Q\zeta=r(\zeta)Q\zeta\ \ \ \forall z\in S.$$
Then $r(Q\zeta)=r(\zeta)$ for every $\zeta\in S^{2n-1}$.

Consider the function
\begin{eqnarray}\label{2.3}
\mathcal{H}(z)=\bigg\{\begin{array}{ccc}
\frac{1}{q}r\left(\frac{z}{|z|}\right)^{-q}|z|^q,&\ \ {\rm if}\ z\neq0,\\
0,&\ \ {\rm if}\ z=0,
\end{array}
\end{eqnarray}
where $1<q<2$ is a fixed number. Then
\begin{eqnarray}\label{2.4}
\mathcal{H}(Qz)=\mathcal{H}(z)\ \ \forall z\in\mathbf{R}^{2n}.
\end{eqnarray}
It is easy to see that
$$S=\{z: \mathcal{H}(z)=\frac{1}{q}\}.$$
Since $H$ and $\mathcal{H}$ are both invariant on $S$, there exists a continuous function
$\kappa(z)>0$ such that
\begin{eqnarray}\label{2.5}
\nabla H(z)=\kappa(z)\nabla\mathcal{H}(z)\ \ \forall z\in S.
\end{eqnarray}
From $H(z)=H(Qz)$ and \eqref{2.4}, one has
$$\nabla H(z)=Q^\top\nabla H(Qz),\ \ \ \nabla \mathcal{H}(z)=Q^\top\nabla \mathcal{H}(Qz).$$
Then by \eqref{2.5},
$$\nabla H(Qz)=\kappa(z)\nabla \mathcal{H}(Qz).$$
Thus we have
$$\kappa(Qz)=\kappa(z).$$

The following result shows that $H$ and $\mathcal{H}$ have the same $Q$-rotating orbits on $S$.
\begin{lemma}
Assume $\tilde{z}(t)$ is a $Q$-rotating periodic solution of Hamiltonian $\mathcal{H}$ on $S$, then
$$z(t)=\tilde{z}(s(t))$$
is a $Q$-rotating periodic solution of Hamiltonian $H$ on $S$, where $s$ is the solution of the equation
\begin{eqnarray}\label{2.16}
s'=\kappa(\tilde{z}(s)).
\end{eqnarray}
\end{lemma}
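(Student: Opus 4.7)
The plan splits into two parts: first verify that the time change produces a solution of the $H$-system on $S$, then construct the new rotating period from $\tilde{T}$.

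\textbf{Step 1 (Hamiltonian equation).} Since $\tilde{z}(t)\in S$ for all $t$ and $s$ is a strictly monotone reparametrization (note $s'=\kappa\circ\tilde{z}\circ s>0$), the reparametrized curve $z(t)=\tilde{z}(s(t))$ still lies on $S$. Differentiating and using $\tilde{z}'=J\nabla\mathcal{H}(\tilde{z})$,
\begin{equation*}
z'(t)=s'(t)\,\tilde{z}'(s(t))=\kappa(\tilde{z}(s(t)))\,J\nabla\mathcal{H}(\tilde{z}(s(t))).
\end{equation*}
By the identity $\nabla H(w)=\kappa(w)\nabla\mathcal{H}(w)$ established for $w\in S$ in \eqref{2.5}, the right-hand side equals $J\nabla H(z(t))$. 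So $z$ solves \eqref{1.1}.

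\textbf{Step 2 (Rotating period).} Let $\tilde{T}>0$ be the rotating period of $\tilde{z}$, and normalize $s(0)=0$. Because $s$ is strictly increasing and unbounded, there is a unique $T>0$ with $s(T)=\tilde{T}$. Set $w(t):=s(t+T)-\tilde{T}$, so $w(0)=0$. Differentiating and using $\tilde{z}(\sigma+\tilde{T})=Q\tilde{z}(\sigma)$ together with the $Q$-invariance $\kappa(Q\cdot)=\kappa(\cdot)$ derived just before the lemma,
\begin{equation*}
w'(t)=s'(t+T)=\kappa\bigl(\tilde{z}(w(t)+\tilde{T})\bigr)=\kappa\bigl(Q\tilde{z}(w(t))\bigr)=\kappa(\tilde{z}(w(t))).
\end{equation*}
Hence $w$ satisfies the same scalar ODE \eqref{2.16} with the same initial value as $s$. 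Uniqueness of the Cauchy problem (the right-hand side is continuous and $s$ is explicitly a $C^1$ solution) gives $w\equiv s$, i.e.\ $s(t+T)=s(t)+\tilde{T}$ for all $t$. Therefore
\begin{equation*}
z(t+T)=\tilde{z}(s(t)+\tilde{T})=Q\tilde{z}(s(t))=Qz(t),
\end{equation*}
completing the rotating-periodicity.

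\textbf{Main obstacle.} Step 1 is essentially a chain-rule check. The only point that is not purely formal is the uniqueness argument of Step 2: the identity $s(t+T)=s(t)+\tilde{T}$ must be promoted from the single value $t=0$ to all $t\in\mathbf{R}$, and this works precisely because the $Q$-invariance $\kappa(Qz)=\kappa(z)$ lets the $Q$-shift absorb into the argument of $\kappa$, turning the shifted ODE for $w$ back into the original ODE for $s$. Without this invariance (which in turn required the $Q$-invariance of both $H$ and $\mathcal{H}$ on $S$), the reparametrization would generally fail to respect the rotating symmetry.
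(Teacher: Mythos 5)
Your proof follows the same overall decomposition as the paper's: chain rule plus the proportionality $\nabla H=\kappa\nabla\mathcal{H}$ on $S$ for the Hamiltonian equation, then the $Q$-invariance $\kappa(Qw)=\kappa(w)$ to transfer the rotating period. The only genuine divergence is in Step 2. The paper works explicitly with $h(s)=\int_a^s\frac{\mathrm{d}\tau}{\kappa(\tilde z(\tau))}$, notes that $\kappa\circ\tilde z$ is $\tilde T$-periodic so $h(s+\tilde T)-h(s)=T$ is a constant independent of $s$, and then inverts; you instead package the same fact as an ODE uniqueness argument for $w(t)=s(t+T)-\tilde T$.

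There is a small gap in your justification of that uniqueness. You write that uniqueness follows because ``the right-hand side is continuous and $s$ is explicitly a $C^1$ solution,'' but neither of these guarantees uniqueness: continuity gives only Peano existence, and a continuous-coefficient scalar ODE can have a $C^1$ solution coexisting with others (cf.\ $x'=2\sqrt{|x|}$, $x(0)=0$, solved both by $x\equiv 0$ and by $x(t)=t^2$ for $t\geq 0$; both are $C^1$). What actually rescues your argument is that the right-hand side $\kappa(\tilde z(\cdot))$ is continuous \emph{and bounded away from zero} (because $\kappa>0$ is continuous and $\overline{\tilde z(\mathbf{R})}$ is a compact subset of $S$), so one may separate variables: $\int_{s_0}^{s}\frac{\mathrm{d}\tau}{\kappa(\tilde z(\tau))}=t-t_0$ has a strictly increasing, hence invertible, left-hand side, and the unique solution is its inverse. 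That is precisely the paper's integral formula, so once you insert this positivity/separation-of-variables remark your uniqueness step is airtight and the two proofs are, in substance, the same.
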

\begin{proof}
Assume $\tilde{T}>0$ is a constant, such that
$$\tilde{z}(s+\tilde{T})=Q\tilde{z}(s)\ \ \forall s\in\mathbf{R}.$$
By \eqref{2.16}, one has
\begin{eqnarray}\label{2.7}
h(s)=\int_a^s\frac{\mathrm{d}\tau}{\kappa(\tilde{z}(\tau))}=t+t_0,
\end{eqnarray}
for some constant $a$ and $t_0$. Then $z(t)=\tilde{z}(h^{-1}(t+t_0))$, and
\begin{align*}
z'(t)&=\tilde{z}'(h^{-1}(t+t_0))\frac{1}{h'(h^{-1}(t+t_0))}\\
&=J\nabla \mathcal{H}(\tilde{z}(h^{-1}(t+t_0)))\kappa(\tilde{z}(h^{-1}(t+t_0)))\\
&=J\kappa(z(t))\nabla \mathcal{H}(z(t))\\
&=J\nabla H(z(t)).
\end{align*}
Since $\kappa(\tilde{z}(t+\tilde{T}))=\kappa(\tilde{z}(t))$, we know that
\begin{eqnarray}\label{2.8}
\int_s^{s+\tilde{T}}\frac{\mathrm{d}t}{\kappa(\tilde{z}(t))}=T\ \ \forall s\in\mathbf{R},
\end{eqnarray}
where $T>0$ is a constant.
Since $\kappa(\tilde{z}(t))>0$, by \eqref{2.7} and \eqref{2.8}, one has
$$s(t+T)=s(t)+\tilde{T}\ \ \forall t\in\mathbf{R}.$$
Then for every $t\in\mathbf{R}$,
\begin{eqnarray*}
z(t+T)&=&\tilde{z}(s(t+T))=\tilde{z}(s(t)+\tilde{T})\\
&=&Q\tilde{z}(s(t))=Qz(t).
\end{eqnarray*}
\end{proof}

Now we only need to study $Q$-rotating periodic orbits of Hamiltonian $\mathcal{H}$ on $S$, and still write $\mathcal{H}$ as $H$. By assumption, we have
\begin{align}
\frac{1}{qR^q}|z|^q\leq H(z)\leq\frac{1}{qr^q}|z|^q,\ \  z\in\mathbf{R}^{2n}.
\end{align}
Let $H^*$ be the Legendre-Fenchel transform of $H$, that is
$$H^*(y)=\sup\{\langle z,y\rangle-H(z);z\in\mathbf{R}^{2n}\}.$$
By the expression of $H$, $H^*$ is finite everywhere, $H^*(0)=0$, $H^*\geq0$, and
$$H^*(Qy)=H^*(y)\ \ \ \forall y\in\mathbf{R}^{2n}.$$
Moreover, we have
$H^*(sy)=s^pH^*(y)$ with $p=\frac{q}{q-1}>2$ and
\begin{align}\label{3.25}
\frac{1}{p}r^p|y|^p\leq H^*(y)\leq\frac{1}{p}R^p|y|^p,\ \  y\in\mathbf{R}^{2n}.
\end{align}

It is easy to see that $H$ is strictly convex and $\nabla H$ is strong monotone, see \cite{Struwe}. The following result is classical.

\begin{lemma}\label{lemma1}(see \cite{Struwe}).
Assume $G\in C^1(\mathbf{R}^{2n},\mathbf{R})$ is strictly convex and $\nabla G$ is strongly monotone,
that is, there exists a non-decreasing function $\alpha:[0,\infty)\rightarrow[0,\infty)$ vanishing only at $0$ and satisfying
$\lim\limits_{r\rightarrow\infty}\alpha(r)=\infty$, such that
$$\langle v-w,\nabla G(v)-\nabla G(w)\rangle\geq\alpha(\|v-w\|)\|v-w\|,$$
for all $v,w\in \mathbf{R}^{2n}$. Then $G^*\in C^1(\mathbf{R}^{2n},\mathbf{R})$ and $\nabla G^*(v^*)=v$ for any $v^*=\nabla G(v)$.
\end{lemma}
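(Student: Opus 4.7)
\textbf{Proof plan for Lemma \ref{lemma1}.}

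The plan is to realize $G^*$ explicitly as $\langle v^*,v\rangle-G(v)$ where $v$ depends smoothly enough on $v^*$, using the fact that strong monotonicity forces $\nabla G$ to be a homeomorphism of $\mathbf{R}^{2n}$. First I would show $\nabla G:\mathbf{R}^{2n}\to\mathbf{R}^{2n}$ is a bijection. Injectivity is immediate from the strong monotonicity inequality and the fact that $\alpha$ vanishes only at $0$. For surjectivity, fix $v^*\in\mathbf{R}^{2n}$ and consider $\varphi(z)=G(z)-\langle z,v^*\rangle$; using strong monotonicity applied between $z$ and $0$ together with $\alpha(r)\to\infty$, one deduces that $\langle\nabla G(z),z\rangle/\|z\|\to\infty$ as $\|z\|\to\infty$, hence $\varphi(z)\to+\infty$, so $\varphi$ attains its minimum at some $v$, and the first-order condition gives $\nabla G(v)=v^*$.

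Next I would establish that $(\nabla G)^{-1}$ is continuous. If $v^*_k\to v^*$ and $v_k=(\nabla G)^{-1}(v^*_k)$, $v=(\nabla G)^{-1}(v^*)$, strong monotonicity gives
\begin{equation*}
\alpha(\|v_k-v\|)\,\|v_k-v\|\leq\langle v_k-v,v^*_k-v^*\rangle\leq\|v_k-v\|\,\|v^*_k-v^*\|,
\end{equation*}
so $\alpha(\|v_k-v\|)\to 0$, and since $\alpha$ is non-decreasing and vanishes only at $0$, $\|v_k-v\|\to 0$.

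Then I would compute $G^*$. Fix $v^*$ and set $v=(\nabla G)^{-1}(v^*)$. The function $z\mapsto\langle z,v^*\rangle-G(z)$ is concave (since $G$ is convex) and has vanishing gradient at $z=v$, so the supremum in the definition of $G^*$ is attained at $v$ and
\begin{equation*}
G^*(v^*)=\langle v,v^*\rangle-G(v),\qquad v=(\nabla G)^{-1}(v^*).
\end{equation*}

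Finally I would verify $\nabla G^*(v^*)=v$. For any $w^*\in\mathbf{R}^{2n}$, by the definition of $G^*$ as a supremum and the attainment at $v$, one gets the two-sided bounds
\begin{equation*}
G^*(w^*)\geq\langle v,w^*\rangle-G(v)=G^*(v^*)+\langle v,w^*-v^*\rangle,
\end{equation*}
and conversely, writing $w=(\nabla G)^{-1}(w^*)$,
\begin{equation*}
G^*(w^*)-G^*(v^*)=\langle w,w^*\rangle-\langle v,v^*\rangle-G(w)+G(v)=\langle v,w^*-v^*\rangle+o(\|w^*-v^*\|),
\end{equation*}
where the last equality uses $\nabla G(v)=v^*$ (so $G(w)-G(v)=\langle v^*,w-v\rangle+o(\|w-v\|)$) together with the continuity of $(\nabla G)^{-1}$ established above. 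Combining the two bounds yields Fr\'echet differentiability with $\nabla G^*(v^*)=v$, and continuity of $\nabla G^*$ follows from continuity of $(\nabla G)^{-1}$. The main obstacle is the last step: one has to be careful to upgrade the sub-gradient inequality to a true derivative, and this is precisely where strong monotonicity (via the continuity of $(\nabla G)^{-1}$) is needed rather than mere strict convexity.
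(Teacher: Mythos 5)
The paper does not give a proof of this lemma; it is stated with the reference ``(see \cite{Struwe})'' and used as a black box, so there is no internal argument to compare against. That said, your proposal is essentially the standard Legendre--Fenchel duality argument and the overall structure is sound: strong monotonicity gives injectivity and coercivity of $z\mapsto G(z)-\langle z,v^*\rangle$ (hence surjectivity of $\nabla G$ and finiteness of $G^*$), a second application of strong monotonicity gives continuity of $(\nabla G)^{-1}$, and concavity of $z\mapsto\langle z,v^*\rangle-G(z)$ identifies the maximizer as $v=(\nabla G)^{-1}(v^*)$.

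There is, however, a genuine gap in the final differentiability step as you have written it. You expand
\[
G^*(w^*)-G^*(v^*)=\langle v,w^*-v^*\rangle+\langle w-v,w^*-v^*\rangle-R(w,v),\qquad R(w,v):=G(w)-G(v)-\langle v^*,w-v\rangle,
\]
and you justify $R(w,v)=o(\|w-v\|)$ by $C^1$-ness of $G$ at $v$. But strong monotonicity and continuity of $(\nabla G)^{-1}$ only give $\|w-v\|\to 0$ as $\|w^*-v^*\|\to 0$; they do not give $\|w-v\|=O(\|w^*-v^*\|)$ (indeed $\alpha(\|w-v\|)\le\|w^*-v^*\|$ could allow $\|w-v\|\sim\sqrt{\|w^*-v^*\|}$). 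So $o(\|w-v\|)$ is \emph{not} automatically $o(\|w^*-v^*\|)$, and the asserted equality does not follow from continuity alone. The fix is to use the sign of the error rather than its magnitude: $R(w,v)\ge 0$ by convexity, so the term $-R(w,v)$ only helps the upper bound, and you already have the matching lower bound from the subgradient inequality. Concretely, the subgradient inequality you proved gives $G^*(w^*)-G^*(v^*)\ge\langle v,w^*-v^*\rangle$, and applying the same inequality with the roles of $v^*,w^*$ swapped gives $G^*(w^*)-G^*(v^*)\le\langle w,w^*-v^*\rangle$; subtracting,
\[
0\le G^*(w^*)-G^*(v^*)-\langle v,w^*-v^*\rangle\le\langle w-v,w^*-v^*\rangle\le\|w-v\|\,\|w^*-v^*\|=o(\|w^*-v^*\|),
\]
using only continuity of $(\nabla G)^{-1}$. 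This sidesteps the problematic $o(\|w-v\|)$ term entirely and makes the Fr\'echet differentiability, and with it continuity of $\nabla G^*=(\nabla G)^{-1}$, immediate.
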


If $z(t)$ is a solution of system \eqref{1.1}, denote by $y=-Jz'$, then
\begin{eqnarray}
y=\nabla H(z).
\end{eqnarray}
By Lemma \ref{lemma1}, it is equivalent to $z=\nabla H^*(y)$.
Denote
$$\mathcal{B}=\{y\in L^p([0,T],\mathbf{R}^{2n}):y(t+T)=Qy(t),\ \lim\limits_{\tau\rightarrow\infty}\frac{1}{\tau}\int_0^\tau y(t)\mathrm{d}t=0\},$$
$$\mathcal{B}_1=\{y\in W^{1,p}([0,T],\mathbf{R}^{2n}): y(t+T)=Qy(t)\ \ \forall t\in\mathbf{R}\}.$$
Clearly, $\mathcal{B}$ and $\mathcal{B}_1$ are $\mathcal{Q}(s)$-invariant. Let $\mathcal{P}:\mathbf{R}^{2n}\rightarrow\ker(I_{2n}-Q)$ be the orthogonal projection, and $L_\mathcal{P}=(I_{2n}-Q)\mid_{{\rm Im}(I_{2n}-Q)}$.
We introduce the operator $K:\mathcal{B}\rightarrow\mathcal{B}_1$,
$$(Ky)(t)=\int_0^tJy(s)\mathrm{d}s-L_\mathcal{P}^{-1}\int_0^TJy(s)\mathrm{d}s.$$
Now consider the system
\begin{eqnarray}\label{2.12}
z=Ky+z_0,
\end{eqnarray}
\begin{eqnarray}\label{2.13}
z=\nabla H^*(y),
\end{eqnarray}
for some $z_0\in\ker(I_{2n}-Q)$. Since $W^{1,p}\hookrightarrow C^0$, it is easy to see that $z(t)$ is a $(Q,T)$-rotating periodic solution of system \eqref{1.1}, if and only if it is a solution of \eqref{2.12} and \eqref{2.13}.
System \eqref{2.12} and \eqref{2.13} is equivalent to the following system:
\begin{eqnarray}\label{2.15}
\int_0^T\left\langle(\nabla H^*(y)-Ky)(t),\psi(t)\right\rangle\mathrm{d}t=0\ \ \forall \psi\in \mathcal{B}.
\end{eqnarray}
It is easy to see \eqref{2.15} is the Euler-Lagrange equation of the following functional $E$ on $\mathcal{B}$,
$$E(y)=\int_0^T\left(H^*(y)-\frac{1}{2}\langle y,Ky\rangle\right)\mathrm{d}t.$$

We have to prove
\begin{lemma}
$E$ satisfies Palais-Smale condition.
\end{lemma}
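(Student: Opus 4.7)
The plan is to follow the standard Clarke--Ekeland verification of the Palais--Smale condition for the dual action functional. Suppose $\{y_k\}\subset \mathcal{B}$ satisfies $|E(y_k)|\leq C_0$ and $\|E'(y_k)\|_{\mathcal{B}^*}\to 0$; I must extract a strongly convergent subsequence.

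First I would establish an a priori bound. Since $H^*$ is $p$-homogeneous, Euler's identity gives $\langle \nabla H^*(y),y\rangle=p H^*(y)$. Assuming the self-adjointness of $K$ in the $L^2$-pairing (a routine integration-by-parts computation respecting the $(Q,T)$-rotating periodicity defining $\mathcal{B}$), one has $E'(y)=\nabla H^*(y)-Ky$ as an element of $L^{p'}$. A short algebraic manipulation yields
\begin{equation*}
\left(\frac{p}{2}-1\right)\int_0^T H^*(y_k)\,dt=\frac{1}{2}\langle E'(y_k),y_k\rangle-E(y_k).
\end{equation*}
Since $p>2$ and $H^*(y)\geq \frac{r^p}{p}|y|^p$ by \eqref{3.25}, this forces $\|y_k\|_p\leq C$. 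Reflexivity of $L^p$ then yields a subsequence with $y_k\rightharpoonup y_*$ weakly in $L^p$.

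Next I would exploit compactness of $K$. The explicit formula for $K$ shows it maps $L^p([0,T],\mathbf{R}^{2n})$ continuously into $W^{1,p}([0,T],\mathbf{R}^{2n})$, which embeds compactly into $L^{p'}([0,T],\mathbf{R}^{2n})$ for $p>1$. Hence $Ky_k\to Ky_*$ strongly in $L^{p'}$, and combined with $E'(y_k)\to 0$ in $L^{p'}$, the identity $\nabla H^*(y_k)=E'(y_k)+Ky_k$ gives $\nabla H^*(y_k)\to Ky_*$ strongly in $L^{p'}$. Expanding the pointwise monotonicity of $\nabla H^*$ and using the strong--weak pairings just established,
\begin{equation*}
\int_0^T\langle \nabla H^*(y_k)-\nabla H^*(y_*),y_k-y_*\rangle\,dt\to 0.
\end{equation*}

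The last step is to upgrade this to strong convergence of $y_k$ in $L^p$. For this I would invoke the uniform convexity estimate
\begin{equation*}
\langle \nabla H^*(u)-\nabla H^*(v),u-v\rangle\geq c|u-v|^p\quad\forall\,u,v\in\mathbf{R}^{2n},
\end{equation*}
valid for our strictly convex, positive, $p$-homogeneous $H^*$ with $p>2$. Integrating and combining with the display above forces $\|y_k-y_*\|_p\to 0$. The main technical obstacle is establishing this last inequality: it is more than abstract monotonicity and requires exploiting the $p$-homogeneity explicitly, either via a scaling/compactness argument on the unit sphere of $\mathbf{R}^{2n}$, or via the Clarkson-type inequality $\tfrac{1}{2}(|u|^p+|v|^p)-\left|\tfrac{u+v}{2}\right|^p\geq c_p|u-v|^p$ for $p\geq 2$ combined with the two-sided bounds in \eqref{3.25}. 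If that route proves delicate, a fallback is to apply Minty's monotonicity trick to first identify $y_*$ as a critical point (so $\int H^*(y_k)\to\int H^*(y_*)$) and then invoke the Radon--Riesz property of the uniformly convex space $L^p$.
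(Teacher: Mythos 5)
Your proof follows the same overall route as the paper's: obtain a bound on the (P.-S.) sequence, extract a weak limit, use compactness of $K$ (via $W^{1,p}\hookrightarrow\hookrightarrow C^0$) so that $Ky_k\to Ky_*$ strongly, and then use a strong monotonicity estimate for $\nabla H^*$ to upgrade weak to strong convergence. The differences are minor but worth noting. For the a priori bound, the paper simply records that $E$ is coercive on $\mathcal{B}$ (which follows from $E(y)\geq \frac{r^p}{p}\|y\|_p^p-\tfrac12\|K\|\,\|y\|_p^2$ with $p>2$); your Euler-identity route is equally valid but a bit longer. For the crucial last step, the paper cites Struwe and invokes an abstract strong monotonicity $\langle \nabla H^*(u)-\nabla H^*(v),u-v\rangle\geq\alpha(|u-v|)|u-v|$, then writes the integrated version as if $\alpha$ transferred verbatim to the $L^p$-norm --- which implicitly assumes $\alpha$ is power-like, i.e.\ exactly your estimate $\langle\nabla H^*(u)-\nabla H^*(v),u-v\rangle\geq c|u-v|^p$. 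So your more explicit formulation is arguably cleaner, and you are right to flag that proving it (say by scaling to $\max(|u|,|v|)=1$ and compactness, or by a Clarkson-type inequality) is the only genuinely nontrivial point; the paper glosses over it.

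One caution on your fallback route: Minty's trick indeed identifies $y_*$ as a critical point and then gives $\int_0^T H^*(y_k)\,\mathrm{d}t\to\int_0^T H^*(y_*)\,\mathrm{d}t$, but the Radon--Riesz property of $L^p$ requires $\|y_k\|_p\to\|y_*\|_p$, and that does \emph{not} follow from convergence of $\int H^*$ unless $H^*$ is a multiple of $|\cdot|^p$. To close this route you would still need a Clarkson-type inequality for $H^*$ itself (pass to $\int H^*\bigl(\frac{y_k+y_*}{2}\bigr)$ via weak lower semicontinuity and subtract), which brings you back to essentially the same uniform $p$-convexity estimate as your main option. So the fallback is not really an independent alternative; the monotonicity estimate is doing the real work in either case, just as in the paper.
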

\begin{proof}
It is easy to see that $E$ is coercive on $\mathcal{B}$, that is
$$\lim\limits_{\|y\|\rightarrow\infty}E(y)=\infty.$$
Assume $\{y_k\}$ is a (P.-S.) sequence of $E$. Then there exists a constant $c>0$ such that
$$\|y_m\|\leq c\ \ \ \forall m\in\mathbf{N}_+,$$
which implies that there is a weakly convergent subsequence, still denoted by $\{y_m\}$, such that
$y_m\rightharpoonup y$ in $\mathcal{B}$ and $Ky_m\rightarrow Ky$ strongly in $\mathcal{B}_2$, where
$$\mathcal{B}_2=\{y\in L^q([0,T],\mathbf{R}^{2n}): y(t+T)=Qy(t) \ \forall t\in\mathbf{R}\}.$$
Since $H^*$ is strictly convex and homogeneous on rays of degree $p>1$, $\nabla H^*$ is strong monotone.
Hence we have
\begin{align*}
o(1)\|y_m-y\|&=\langle y_m-y,E'(y_m)-E'(y)\rangle\\
&\geq\int_0^T\left(\langle y_m-y,\nabla H^*(y_m)-\nabla H^*(y)\rangle-\langle y_m-y,Ky_m-Ky\rangle\right)\mathrm{d}t\\
&\geq\alpha(\|y_m-y\|)\|y_m-y\|-o(1),
\end{align*}
where $o(1)\rightarrow0$ as $m\rightarrow\infty$, and $\alpha$ is a nonnegative non-decreasing function
that vanishes only at $0$. Hence we obtain that $y_m\rightarrow y$ strongly in $\mathcal{B}$.
\end{proof}

Consider the following group action on $\mathcal{B}$:
$$\mathcal{Q}(s)y=y_s(t)=y(t+s),\ {\rm for}\ s\in\mathbf{R}.$$
We have
\begin{lemma}
$E$ is $\mathcal{Q}(s)$-invariant, that is, for each $y\in\mathcal{B}$
$$E(\mathcal{Q}(s)y)=E(y)\ \ \forall s\in\mathbf{R}.$$
\end{lemma}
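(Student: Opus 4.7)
The plan is to decompose the functional as
\begin{equation*}
E(y)=\int_0^T H^*(y(t))\,\mathrm{d}t-\frac{1}{2}\int_0^T\langle y(t),(Ky)(t)\rangle\,\mathrm{d}t
\end{equation*}
and verify $\mathcal{Q}(s)$-invariance of each piece separately, writing $y_s(t)=y(t+s)$.

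For the Hamiltonian part, the invariance $H^*(Qu)=H^*(u)$ combined with $y(t+T)=Qy(t)$ forces $t\mapsto H^*(y(t))$ to be genuinely $T$-periodic, so a change of variables yields $\int_0^T H^*(y(t+s))\,\mathrm{d}t=\int_s^{T+s}H^*(y(u))\,\mathrm{d}u=\int_0^T H^*(y(u))\,\mathrm{d}u$. The same orthogonality argument shows that $\langle y(t),(Ky)(t)\rangle$ is $T$-periodic in $t$, since $Q\in\mathrm{O}(2n)$ preserves the inner product and $(Ky)(t+T)=Q(Ky)(t)$. Consequently
\begin{equation*}
\int_0^T\langle y(t+s),(Ky)(t+s)\rangle\,\mathrm{d}t=\int_0^T\langle y,Ky\rangle\,\mathrm{d}t.
\end{equation*}

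The delicate point is that this is not what we need; we want $\int_0^T\langle y_s,Ky_s\rangle\,\mathrm{d}t$, and $K$ does not commute with the shift because of the constant correction $-L_\mathcal{P}^{-1}\int_0^T Jy\,\mathrm{d}s$. First I would establish the identity
\begin{equation*}
(Ky_s)(t)=(Ky)(t+s)-\mathcal{P}(Ky)(s),
\end{equation*}
obtained by substituting $y_s$ into the definition of $K$, computing $\int_s^{T+s}Jy(u)\,\mathrm{d}u=(Ky)(T+s)-(Ky)(s)=-(I-Q)(Ky)(s)$ from the quasi-periodicity of $Ky$, and using the readily checked identity $L_\mathcal{P}^{-1}(I-Q)v=(I-\mathcal{P})v$ that follows from the definition of $L_\mathcal{P}$ together with $\mathrm{Im}(I-Q)=\ker(I-Q)^{\perp}$. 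The main obstacle is then to show that the constant offset contributes nothing, i.e.\ $\bigl\langle\int_0^T y_s(t)\,\mathrm{d}t,\,\mathcal{P}(Ky)(s)\bigr\rangle=0$. Here the defining time-average condition of $\mathcal{B}$ becomes essential: a short mean-ergodic argument applied to $\int_0^{NT}y\,\mathrm{d}t=\sum_{k=0}^{N-1}Q^k\int_0^T y\,\mathrm{d}t$ shows the vanishing time-average is equivalent to $\int_0^T y\,\mathrm{d}t\in\mathrm{Im}(I-Q)=\ker(I-Q)^{\perp}$, and the analogous property is inherited by $y_s\in\mathcal{B}$. Since $\mathcal{P}(Ky)(s)\in\ker(I-Q)$, the orthogonality gives the required vanishing, and combining this with the periodic shift already obtained completes the proof of $E(\mathcal{Q}(s)y)=E(y)$.
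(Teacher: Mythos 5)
Your proof is correct, and it is in fact more careful than the one given in the paper. The paper simply computes $K(\mathcal{Q}(s)y)(t)=\int_s^{t+s}Jy(\tau)\,\mathrm{d}\tau-L_\mathcal{P}^{-1}\int_s^{T+s}Jy(\tau)\,\mathrm{d}\tau$ and then asserts, by shifting both integration ranges back to start at $0$, that this equals $(Ky)(t+s)$; in other words it claims $K$ commutes with $\mathcal{Q}(s)$. As you correctly observe, the change of variables leaves the extra constant
\begin{equation*}
-\int_0^sJy(\tau)\,\mathrm{d}\tau+L_\mathcal{P}^{-1}(I_{2n}-Q)\int_0^sJy(\tau)\,\mathrm{d}\tau=-\mathcal{P}\int_0^sJy(\tau)\,\mathrm{d}\tau=-\mathcal{P}(Ky)(s),
\end{equation*}
which lies in $\ker(I_{2n}-Q)$ and does not vanish in general; in the periodic case $Q=I_{2n}$ it is all of $-(Ky)(s)$. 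The paper drops this term silently. Your two added ingredients close the gap: the identity $L_\mathcal{P}^{-1}(I_{2n}-Q)=I_{2n}-\mathcal{P}$, and the mean-ergodic observation that the defining condition $\lim_{\tau\to\infty}\tau^{-1}\int_0^\tau y\,\mathrm{d}t=0$ is equivalent to $\int_0^T y\,\mathrm{d}t\in\mathrm{Im}(I_{2n}-Q)=\ker(I_{2n}-Q)^{\perp}$, applied to $y_s\in\mathcal{B}$. This orthogonality kills the constant offset inside the quadratic form, and the remainder $\int_0^T\langle y_s(t),(Ky)(t+s)\rangle\,\mathrm{d}t$ then reduces to $\int_0^T\langle y,Ky\rangle\,\mathrm{d}t$ by $T$-periodicity exactly as the paper does. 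Your treatment of the $H^*$ part coincides with the paper's. In short: same overall splitting, but you supply the orthogonality argument the paper implicitly relies on and never states.
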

\begin{proof}
By a simple calculation, we have
\begin{align*}
K(\mathcal{Q}(s)y)(t)&=\int_0^tJy(\tau+s)\mathrm{d}\tau-L_\mathcal{P}^{-1}\int_0^TJy(\tau+s)\mathrm{d}\tau\\
&=\int_s^{t+s}Jy(\tau)\mathrm{d}\tau-L_\mathcal{P}^{-1}\int_s^{T+s}Jy(\tau)\mathrm{d}\tau\\
&=\int_0^{t+s}Jy(\tau)\mathrm{d}\tau-L_\mathcal{P}^{-1}\int_0^{T}Jy(\tau)\mathrm{d}\tau\\
&=(Ky)(t+s).
\end{align*}
Notice that $H^*(y(t))$ and $\langle y(t),(Ky)(t)\rangle$ are both $T$ periodic, which implies
\begin{align*}
E(\mathcal{Q}(s)y)&=\int_0^T\left(H^*(y(t+s))-\frac{1}{2}\big\langle y(t+s),(Ky)(t+s)\big\rangle\right)\mathrm{d}t\\
&=\int_s^{T+s}\left(H^*(y(t))-\frac{1}{2}\big\langle y(t),(Ky)(t)\big\rangle\right)\mathrm{d}t\\
&=E(y).
\end{align*}
\end{proof}

Let $$m=\inf\{E(y):y\in\mathcal{B}\},$$
$$\mathcal{B}'=\{y\in \mathcal{B}: {\rm there\ exists\ a}\ 0<T_1<T, {\rm\ such\ that}\ y(t+T_1)=Qy(t)\},$$
$$m^*=\inf\{E(y):y\in \mathcal{B}'\}.$$

We have the following estimates.
\begin{lemma}\label{lem3.5}
There exist $y\in\mathcal{B}$ and $y^*\in\mathcal{B}'$,
such that $E(y)=m,\ E(y^*)=m^*,$ and
$$m\leq2^{\frac{p}{p-2}}m^*<0.$$
\end{lemma}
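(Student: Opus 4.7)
The plan is three-fold: (i) produce the minimizers $y\in\mathcal{B}$ and $y^*\in\mathcal{B}'$ via the direct method, (ii) exhibit a test function to establish $m^*<0$, and (iii) deduce the inequality $m\le 2^{p/(p-2)}m^*$ via a period-rescaling argument.

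For step (i), $E$ is weakly lower semicontinuous on $\mathcal{B}$: the convex term $\int_0^T H^*(y)\,dt$ is w.l.s.c., while $K$ factors through the compact embedding $W^{1,p}\hookrightarrow L^q$, making $\int_0^T\langle y,Ky\rangle\,dt$ weakly continuous. Combined with coercivity (already shown in the Palais--Smale lemma), the direct method yields $y\in\mathcal{B}$ with $E(y)=m$. For $\mathcal{B}'$, take a minimizing sequence $\{y_n\}$ with $y_n(t+T_1^{(n)})=Qy_n(t)$; after extraction, $T_1^{(n)}\to T_1^\star\in[0,T]$ and $y_n\rightharpoonup y^\star$ in $L^p$. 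The degenerate cases $T_1^\star\in\{0,T\}$ are ruled out because in either the twisted-periodicity condition collapses and forces the limit into $\fix\{\mathcal{Q}(s)\}$, on which $E\ge 0$, contradicting $m^*<0$. Hence $y^\star\in\mathcal{B}'$ and $E(y^\star)=m^*$ by w.l.s.c.

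For step (ii), pick an eigenvalue $e^{\sqrt{-1}\theta}$ of $Q$ (with $\theta\in(0,2\pi)$) and a corresponding eigenvector $\xi\in\mathbf{C}^{2n}$, and set $w(t)=\mathrm{Re}(e^{\sqrt{-1}(2\pi+\theta)t/T}\xi)$. With $\omega=(2\pi+\theta)/T$ and $T_1=T\theta/(\theta+2\pi)\in(0,T)$, one verifies $w(t+T)=Qw(t)$ and $w(t+T_1)=Qw(t)$, so $w\in\mathcal{B}'$. A direct calculation identifies $w$ as an eigenfunction, $(Kw)(t)=w(t)/\omega$, hence $\int_0^T\langle w,Kw\rangle=\omega^{-1}\int_0^T|w|^2\,dt>0$. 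Using $H^*(y)\le R^p|y|^p/p$ from \eqref{3.25},
\[
E(\varepsilon w)\le\frac{\varepsilon^p R^p}{p}\int_0^T|w|^p\,dt-\frac{\varepsilon^2}{2\omega}\int_0^T|w|^2\,dt,
\]
which is negative for small $\varepsilon>0$ since $p>2$. Thus $m^*<0$.

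For step (iii), let $y^*$ be a $\mathcal{B}'$-minimizer with $y^*(t+T_1)=Qy^*(t)$. Since $y^*\in\mathcal{B}$ also gives $y^*(t+(T-T_1))=y^*(t)$, a Euclidean-type iteration on $(T_1,T-T_1)$ (replacing $T_1$ by $2T_1-T$ whenever $T_1>T/2$) lets us assume $T_1\le T/2$, so $k:=T/T_1\ge 2$. Define $y(t)=\alpha\,y^*(\beta t)$ with $\beta=T_1/T$; then $y(t+T)=\alpha y^*(\beta t+T_1)=Qy(t)$, so $y\in\mathcal{B}$. By $Q$-orthogonality and $QJ=JQ$, the integrands $H^*(y^*(t))$ and $\langle y^*(t),(Ky^*)(t)\rangle$ are $T_1$-periodic, and a change of variables yields
\[
\int_0^T H^*(y)\,dt=\alpha^p a,\qquad\int_0^T\langle y,Ky\rangle\,dt=\alpha^2 k\,b,
\]
where $a=\int_0^T H^*(y^*)$ and $b=\int_0^T\langle y^*,Ky^*\rangle$. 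Since $\alpha y^*\in\mathcal{B}'$ for every $\alpha>0$, the first-order condition at $y^*$ gives $b=pa$, hence $m^*=-a(p-2)/2$. Optimizing $E(y)=\alpha^p a-\alpha^2 kb/2$ over $\alpha$ yields $\alpha_*^{p-2}=k$ and $E(y)=k^{p/(p-2)}m^*\le 2^{p/(p-2)}m^*$, so $m\le E(y)\le 2^{p/(p-2)}m^*<0$.

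The main obstacle is in step (iii): the $L_\mathcal{P}^{-1}\int_0^T J(\cdot)$ correction in the definition of $K$ is anchored to the period $T$, so after time rescaling to the sub-period $T_1$ one must verify that the identity $\int_0^T\langle y,Ky\rangle=\alpha^2 kb$ holds cleanly. The key observation is that the minimizer $y^*$ behaves like a generalized eigenfunction of $K$ on $\mathcal{B}_{T_1}$: on each $Q$-eigenblock with angle $\theta_j$ and frequency $\omega_j$, the compatibility $\omega_j T_1\equiv\theta_j$ and $\omega_j T=k\omega_j T_1\equiv k\theta_j\pmod{2\pi}$ forces $(k-1)\theta_j\in 2\pi\mathbf{Z}$, which kills the boundary correction $\propto\sin((k-1)\theta_j)$ arising from the $L_\mathcal{P}^{-1}$ term and makes the scaling identity clean.
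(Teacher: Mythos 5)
Your overall strategy (direct method for $m,m^*$, first-order condition $b=pa$ where $a=\int_0^TH^*(y^*)$, $b=\int_0^T\langle y^*,Ky^*\rangle$, rescaling $y(t)=\alpha y^*(T_1t/T)$, minimality of $T_1$ giving $T\geq 2T_1$, then optimizing in $\alpha$) is essentially the paper's. Step (ii) is an explicit construction of a negative-energy test function, which the paper does not spell out; it is a welcome addition, but note that for $Q=I_{2n}$ every $\theta_j=0$ and your choice $T_1=T\theta/(\theta+2\pi)$ degenerates to $0$, so you would need $\omega=(2\pi k+\theta)/T$ with $k\geq 2$ in that case, and you must also choose the eigenvector $\xi$ so that $J\xi=\sqrt{-1}\,\xi$ (not $-\sqrt{-1}\,\xi$) to make $Kw=w/\omega$ rather than $-w/\omega$.

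The genuine gap is in step (iii), precisely at the point you flag as the ``main obstacle.'' The identity $\int_0^T\langle y,Ky\rangle\,\mathrm{d}t=\alpha^2 (T/T_1)\,b$ relies on the fact that $\langle y^*(t),(Ky^*)(t)\rangle$ is $T_1$-periodic, and a direct computation gives
\begin{align*}
(Ky^*)(t+T_1)=Q(Ky^*)(t)-\int_{T_1}^TJy^*(s)\,\mathrm{d}s,
\end{align*}
so the $Q$-orthogonality you invoke only closes the argument once one knows $\int_{T_1}^Ty^*=0$. Your proposed resolution, that a compatibility of frequencies forces $(k-1)\theta_j\in 2\pi\mathbf{Z}$ and ``kills the boundary correction,'' does not work: it tacitly assumes $k=T/T_1$ is an integer, which need not hold, and it also treats $y^*$ as an eigenfunction of $K$, which is not established (that statement is proved in the paper only for the constrained maximizer $\bar z\in\Gamma$ in Lemma~\ref{lem3.7}, not for $y^*$). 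The correct argument, used in the paper, is elementary and does not involve Fourier modes at all: from $y^*(t+T_1)=Qy^*(t)$ and $y^*(t+T)=Qy^*(t)$ one gets $y^*(t+T-T_1)=y^*(t)$, and then the zero ergodic mean condition built into $\mathcal{B}$ forces $\int_{T_1}^Ty^*\,\mathrm{d}t=0$; this gives $\int_0^{T_1}Jy^*=\int_0^TJy^*$, hence $(K\bar y)(t)=(T/T_1)(Ky^*)(T_1t/T)$ and $(Ky^*)(t+T_1)=Q(Ky^*)(t)$, after which both $H^*(y^*(\cdot))$ and $\langle y^*,(Ky^*)\rangle$ are honestly $T_1$-periodic and your change-of-variables formulas become correct for arbitrary (rational or irrational) $T/T_1$. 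As written, your step (iii) has a real hole at exactly this point.
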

\begin{proof}
Notice that $E$ is weakly lower semi-continuous and coercive on $\mathcal{B}$, $\mathcal{B}$ and $\mathcal{B}'$
are weakly closed. Then there exist $y\in\mathcal{B}$ and $y^*\in\mathcal{B}'$,
such that $E(y)=m,\ E(y^*)=m^*.$
Assume
\begin{align}\label{03.30}
y^*(t+T_1)=Qy^*(t)\ \ \forall t\in\mathbf{R},
\end{align}
 where $0<T_1<T$ is the minimum positive $Q$-rotating period of $y^*$. Since $y^*$ is the minimum of $E$ in $\mathcal{B}'$, one has
$$\langle y^*,E'(y^*)\rangle=p\int_0^TH^*(y^*)\mathrm{d}t-\int_0^T\langle y^*,Ky^*\rangle\mathrm{d}t=0.$$
It follows that
$$m^*=\left(\frac{1}{p}-\frac{1}{2}\right)\int_0^T\langle y^*,Ky^*\rangle\mathrm{d}t<0.$$
Let $\bar{y}(t)=y^*(\frac{T_1 t}{T})$. Then one has
$$\bar{y}(t+T)=y^*(\frac{T_1 t}{T}+T_1)=Qy^*(\frac{T_1 t}{T})=Q\bar{y}(t),$$
and
\begin{align*}
\lim\limits_{\tau\rightarrow\infty}\frac{1}{\tau}\int_0^\tau \bar{y}(t)\mathrm{d}t&=\lim\limits_{\tau\rightarrow\infty}\frac{1}{\tau}\int_0^\tau y^*\left(\frac{T_1 t}{T}\right)\mathrm{d}t\\
&=\lim\limits_{\tau\rightarrow\infty}\frac{T}{T_1\tau}\int_0^{\frac{T_1\tau}{T}}y^*(t)\mathrm{d}t\\
&=0.
\end{align*}
Then $\bar{y}\in\mathcal{B}$, and
\begin{align}\label{03.31}
m\leq\inf_{s>0}E(s\bar{y})=\inf_{s>0}\left(s^p\int_0^TH^*(\bar{y})\mathrm{d}t-\frac{s^2}{2}\int_0^T\langle \bar{y},K\bar{y}\rangle\mathrm{d}t\right).
\end{align}
By a simple calculation, we have
\begin{align}\label{3.30}
\int_0^TH^*(\bar{y}(t))\mathrm{d}t=\frac{T}{T_1}\int_0^{T_1}H^*(y^*(t))\mathrm{d}t.
\end{align}
Consider the function $H^*(y^*(t))$. Then
\begin{align*}
H^*(y^*(t+T_1))=H^*(y^*(t+T))=H^*(y^*(t)).
\end{align*}
If $k_1T_1+k_2T\neq0$ for any $k_1,k_2\in\mathbf{Z}$, one has
\begin{align}\label{3.31}
H^*(y^*(t))=c\ \ \ \forall t\in\mathbf{R},
\end{align}
for some constant $c$.
If there exist $k_1,k_2\in\mathbf{Z}$ such that $k_1T_1+k_2T=0$, without loss of generality, we assume that
$T_1=m_1\tilde{T}$, $T=m_2\tilde{T}$, where $m_1, m_2\in\mathbf{N}$ and $\tilde{T}$ is the minimum positive period of $H^*(y^*(t))$.
Then we have
\begin{align*}
\int_0^{T_1}H^*(y^*(t))\mathrm{d}t=m_1\int_0^{\tilde{T}}H^*(y^*(t))\mathrm{d}t,
\end{align*}
\begin{align*}
\int_0^{T}H^*(y^*(t))\mathrm{d}t=m_2\int_0^{\tilde{T}}H^*(y^*(t))\mathrm{d}t.
\end{align*}
Then, one has
\begin{align}\label{3.32}
\int_0^{T}H^*(y^*(t))\mathrm{d}t=\frac{m_2}{m_1}\int_0^{T_1}H^*(y^*(t))\mathrm{d}t=\frac{T}{T_1}\int_0^{T_1}H^*(y^*(t))\mathrm{d}t.
\end{align}
By \eqref{3.30}-\eqref{3.32}, we have
\begin{align}\label{3.33}
\int_0^TH^*(\bar{y}(t))\mathrm{d}t=\int_0^TH^*(y^*(t))\mathrm{d}t.
\end{align}
Since $y^*(t+T_1)=Qy^*(t)$ and $y^*(t+T)=Qy^*(t)$, one has
$$y^*(t+T-T_1)=y^*(t).$$
It follows from $y^*\in\mathcal{B}$ that
\begin{align*}
\int_{T_1}^Ty^*(t)\mathrm{d}t=\lim\limits_{\tau\rightarrow\infty}\frac{1}{\tau}\int_0^\tau y^*(t)\mathrm{d}t=0,
\end{align*}
which implies that
\begin{align}
\int_0^Ty^*(t)\mathrm{d}t=\int_0^{T_1}y^*(t)\mathrm{d}t.
\end{align}
Then we have
\begin{align*}
\int_0^T\langle \bar{y}(t), K\bar{y}(t)\rangle\mathrm{d}t&=\int_0^T\left\langle \bar{y}(t), \int_0^tJ\bar{y}(s)\mathrm{d}s-L_\mathcal{P}^{-1}\int_0^TJ\bar{y}(s)\mathrm{d}s\right\rangle\mathrm{d}t\\
&=\frac{T}{T_1}\int_0^T\left\langle y^*(\frac{T_1t}{T}), \int_0^{\frac{T_1t}{T}}Jy^*(s)\mathrm{d}s-L_\mathcal{P}^{-1}\int_0^{T_1}Jy^*(s)\mathrm{d}s\right\rangle\mathrm{d}t\\
&=\frac{T^2}{T_1^2}\int_0^{T_1}\langle y^*(t), Ky^*(t)\rangle\mathrm{d}t.
\end{align*}
Hence analogous to \eqref{3.33}, we have
\begin{align}\label{3.37}
\int_0^T\langle \bar{y}(t), K\bar{y}(t)\rangle\mathrm{d}t=\frac{T}{T_1}\int_0^T\langle y^*(t), Ky^*(t)\rangle\mathrm{d}t.
\end{align}
We claim that $T\geq 2T_1$, otherwise $T_1-(T-T_1)>0$ and
$$y^*(t+(T_1-(T-T_1)))=Qy^*(t)\ \ \forall t\in\mathbf{R},$$
which is in contradiction with the minimum of $T_1$.
By \eqref{03.31}, \eqref{3.33} and \eqref{3.37}, we have
\begin{align*}
m&\leq\inf_{s>0}\left(\frac{s^p}{p}-\frac{s^2T}{2T_1}\right)\int_0^T\langle y^*,Ky^*\rangle\mathrm{d}t\\
&=\left(\frac{T}{T_1}\right)^{\frac{p}{p-2}}\left(\frac{1}{p}-\frac{1}{2}\right)\int_0^T\langle y^*,Ky^*\rangle\mathrm{d}t\\
&\leq2^{\frac{p}{p-2}}m^*.
\end{align*}
\end{proof}

Let
$$\Gamma=\left\{z\in\mathcal{B}:\|z\|=1\right\},\ \ b=\sup\left\{\int_0^T\langle z,Kz\rangle\mathrm{d}t:z\in\Gamma\right\}.$$
Then we have
\begin{lemma}
$m\geq c_0r^{\frac{2p}{2-p}}$ with $c_0=\left(\frac{1}{p}-\frac{1}{2}\right)b^{\frac{p}{p-2}}$.
\end{lemma}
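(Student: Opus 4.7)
The plan is to bound $E(y)$ from below by a one–variable function of $\|y\|_p$ and then minimize explicitly. The key inputs are the lower bound on $H^*$ given by \eqref{3.25}, namely $H^*(y)\ge \frac{r^p}{p}|y|^p$, H\"older's inequality relating $L^2$ and $L^p$ norms, and the definition of $b$ applied to a suitably rescaled element of $\Gamma$.

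More precisely, I would proceed as follows. Fix $y\in\mathcal{B}$ with $y\ne 0$, set $a=\bigl(\tfrac{1}{T}\int_0^T|y|^2\,\mathrm{d}t\bigr)^{1/2}$, and note that $z=y/a\in\Gamma$, so by the definition of $b$
\[
\int_0^T\langle y,Ky\rangle\,\mathrm{d}t=a^2\int_0^T\langle z,Kz\rangle\,\mathrm{d}t\le \frac{b}{T}\int_0^T|y|^2\,\mathrm{d}t.
\]
H\"older's inequality (with $p>2$) gives $\int_0^T|y|^2\,\mathrm{d}t\le T^{(p-2)/p}\|y\|_p^2$, so combining with \eqref{3.25},
\[
E(y)\ge \frac{r^p}{p}\|y\|_p^p-\frac{b}{2\,T^{2/p}}\|y\|_p^2.
\]
This reduces the problem to minimizing the real function $g(s)=\frac{r^p}{p}s^p-\frac{b}{2T^{2/p}}s^2$ on $(0,\infty)$.

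Setting $g'(s)=0$ yields the unique critical point $s_0=\bigl(b/(r^p T^{2/p})\bigr)^{1/(p-2)}$, and a direct computation using $r^p s_0^{p-2}=b/T^{2/p}$ shows
\[
g(s_0)=\Bigl(\frac{1}{p}-\frac{1}{2}\Bigr)\frac{b}{T^{2/p}}s_0^2=\Bigl(\frac{1}{p}-\frac{1}{2}\Bigr)b^{p/(p-2)}T^{2/(2-p)}r^{2p/(2-p)}=c_0\,r^{\frac{2p}{2-p}}.
\]
Since $g(s)\ge g(s_0)$ for all $s>0$, we obtain $E(y)\ge c_0\,r^{2p/(2-p)}$ for every nonzero $y\in\mathcal{B}$, and since $c_0<0$ the bound also holds trivially at $y=0$. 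Taking the infimum over $\mathcal{B}$ gives the desired inequality.

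The only genuinely delicate step is verifying that the exponents collapse to exactly $\tfrac{2p}{2-p}$ and $\tfrac{2}{2-p}$; I expect the rest to be routine manipulation of the inequalities above, and the H\"older/rescaling argument is the natural way to couple the $L^p$ growth of $H^*$ to the $L^2$–flavored definition of $b$.
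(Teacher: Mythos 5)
Your proof is correct, and it reaches the same constant $c_0\,r^{2p/(2-p)}$ as the paper, but by a genuinely different route. The paper invokes the fact (Lemma 3.5) that the infimum $m$ is \emph{attained} at some $y\in\mathcal{B}$, then exploits the Euler--Lagrange identity $\langle y,E'(y)\rangle=0$ to write $m=\bigl(\tfrac1p-\tfrac12\bigr)\int_0^T\langle y,Ky\rangle\,\mathrm{d}t$; writing $y=\lambda z$ with $z\in\Gamma$, the identity pins down a single relation between $\lambda$, $\int H^*(z)$, and $\int\langle z,Kz\rangle$, and combining it with $\int\langle z,Kz\rangle\le b$ and the lower bound on $H^*$ gives an explicit upper bound on $\lambda$ and hence a lower bound on $m$. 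You instead prove the stronger pointwise estimate $E(y)\ge \frac{r^p}{p}\|y\|_p^p-\frac{b}{2T^{2/p}}\|y\|_p^2$ for \emph{all} $y\in\mathcal{B}$ (using the definition of $b$ after rescaling into $\Gamma$, plus H\"older --- the same inequality the paper uses via Jensen in \eqref{3.39}), and then minimize the scalar auxiliary function $g(s)$. The arithmetic checks out: $s_0^{p-2}=b/(r^pT^{2/p})$ gives $g(s_0)=\bigl(\tfrac1p-\tfrac12\bigr)b^{p/(p-2)}T^{2/(2-p)}r^{2p/(2-p)}=c_0\,r^{2p/(2-p)}$, and $g$ indeed has its global minimum at $s_0$ on $[0,\infty)$ (it decreases on $(0,s_0)$, increases on $(s_0,\infty)$, and $g(0)=0>g(s_0)$). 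What your version buys: it is an a priori lower bound that does not rely on the minimizer being attained, so it is independent of Lemma 3.5. What the paper's version buys: having already established attainment, the Euler--Lagrange identity eliminates the scalar minimization and fits the same template used to estimate $m^*$ and to analyze $\rho(M)$ later, so it keeps the bookkeeping uniform across the section.
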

\begin{proof}
Assume $E(y)=m$ with $y\in\mathcal{B}$. Then we have
\begin{align}\label{3.38}
\langle y,E'(y)\rangle=p\int_0^TH^*(y)\mathrm{d}t-\int_0^T\langle y,Ky\rangle\mathrm{d}t=0,
\end{align}
which implies
$$m=\left(\frac{1}{p}-\frac{1}{2}\right)\int_0^T\langle y,Ky\rangle\mathrm{d}t=\left(1-\frac{p}{2}\right)\int_0^TH^*(y)\mathrm{d}t.$$
Assume $y=\lambda z$ for some $z\in\Gamma$, $\lambda\geq0$. It follows from \eqref{3.25} and \eqref{3.38} that
\begin{align*}
\lambda^{2-p}b\geq\lambda^{2-p}\int_0^T\langle z,Kz\rangle\mathrm{d}t=p\int_0^TH^*(z)\mathrm{d}t\geq r^p.
\end{align*}
Since $p>2$, we have
$$\lambda\leq r^{\frac{p}{2-p}}b^{\frac{1}{p-2}}.$$
Hence
\begin{align*}
m&=\left(\frac{1}{p}-\frac{1}{2}\right)\lambda^2\int_0^T\langle z,Kz\rangle\mathrm{d}t\geq\left(\frac{1}{p}-\frac{1}{2}\right)\lambda^2b
\geq c_0r^{\frac{2p}{2-p}}.
\end{align*}
\end{proof}

\begin{lemma}\label{lem3.7}
Assume $\int_0^T\langle \bar{z},K\bar{z}\rangle\mathrm{d}t=b$ with $\bar{z}\in\Gamma$. Then $\bar{z}\in M$, where $M$ is defined in Theorem \ref{th2}. Moreover,
for each $z\in M$, one has
\begin{align*}
\frac{T^2}{\tilde{\theta}_n}\leq\int_0^T\langle z,Kz\rangle\mathrm{d}t\leq\frac{T^2}{\tilde{\theta}_1}=b.
\end{align*}
\end{lemma}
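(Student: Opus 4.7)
The plan is to compute $Kz$ explicitly for $z\in M$, use it to obtain the two-sided bounds, and then identify any maximizer $\bar z\in\Gamma$ with an element of $M$ via a Lagrange multiplier and spectral argument.

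First observe $M\subset\Gamma$: since $Q(t)=P\diag(M_1(t),\ldots,M_n(t))P^\top$ is orthogonal, $|Q(t)\xi|=|\xi|=1$, hence $\frac{1}{T}\int_0^T|z|^2\,dt=1$, and the time-average vanishes because in $P^\top$-coordinates each component of $z$ oscillates at a nonzero frequency $\tilde{\theta}_j/T$. For $z(t)=Q(t)\xi\in M$, direct differentiation gives $z'(t)=\frac{1}{T}JAz(t)$, where $A:=P\diag(\tilde{\theta}_1I_2,\ldots,\tilde{\theta}_nI_2)P^\top$ is symmetric positive definite and commutes with both $J$ and $Q$. Integrating and using $J^2=-I_{2n}$, one finds $\int_0^t Jz(s)\,ds=TA^{-1}(z(t)-\xi)$ and $\int_0^T Jz\,ds=-T(I_{2n}-Q)A^{-1}\xi\in\mathrm{Im}(I_{2n}-Q)$; applying $L_{\mathcal{P}}^{-1}$ yields
\[(Kz)(t)=TA^{-1}z(t)-T\mathcal{P}A^{-1}\xi,\]
where $\mathcal{P}$ is the orthogonal projection onto $\ker(I_{2n}-Q)$.

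Evaluating $\int_0^T\langle z,Kz\rangle\,dt$, the second term contributes zero because $\int_0^T z(t)\,dt=T(I_{2n}-Q)A^{-1}J\xi\in\mathrm{Im}(I_{2n}-Q)$ is orthogonal to $\ker(I_{2n}-Q)$. The main term reduces to $T\int_0^T\langle z,A^{-1}z\rangle\,dt=T^2\langle\eta,\Theta^{-1}\eta\rangle$ upon using that $Q(t)$ is orthogonal and commutes with $A^{-1}$, where $\eta:=P^\top\xi$ and $\Theta:=\diag(\tilde{\theta}_j I_2)$. Since $|\eta|=1$ and the eigenvalues of $\Theta^{-1}$ are $\tilde{\theta}_j^{-1}$, one obtains $T^2/\tilde{\theta}_n\le\int_0^T\langle z,Kz\rangle\,dt\le T^2/\tilde{\theta}_1$, with the upper bound attained when $\eta$ is supported on blocks where $\tilde{\theta}_j=\tilde{\theta}_1$.

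For the first assertion, suppose $\bar z\in\Gamma$ achieves $b$. Viewing the quadratic form on $\mathcal{B}\cap L^2$ where $K$ is self-adjoint (it is a bounded right inverse of $-J\partial_t$ modulo the $L_{\mathcal{P}}^{-1}$ correction), the Lagrange multiplier method applied to the constraint $\frac{1}{T}\int|z|^2\,dt=1$ gives $K\bar z=\lambda\bar z$ with $\lambda=b/T$. Since $K\bar z\in\mathcal{B}_1\subset W^{1,p}$, $\bar z$ is continuously differentiable, and differentiating $K\bar z=\lambda\bar z$ yields $\lambda\bar z'(t)=J\bar z(t)$, so $\bar z(t)=e^{tJ/\lambda}\bar z(0)$ and the boundary condition forces $e^{TJ/\lambda}\bar z(0)=Q\bar z(0)$. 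Writing $\bar z(0)=P\eta$ and diagonalizing $J$ and $Q$ simultaneously in the $P$-coordinates, on each block $j$ one needs either $\eta_j=0$ or $T/\lambda\equiv\theta_j\pmod{2\pi}$. Positivity of $\lambda$ then gives $T/\lambda\ge\tilde{\theta}_j\ge\tilde{\theta}_1$ for each $j$ with $\eta_j\ne0$, while maximality forces equality, so $\lambda=T/\tilde{\theta}_1$, $b=T^2/\tilde{\theta}_1$, and $\eta$ is supported on blocks with $\tilde{\theta}_j=\tilde{\theta}_1$. On these blocks $e^{tJ\tilde{\theta}_1/T}$ coincides with $M_j(t)$, so $\bar z(t)=P\diag(M_j(t))\eta=Q(t)\xi$ with $\xi=P\eta$ and $|\xi|=1$. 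Therefore $\bar z\in M$.

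The main difficulty will be rigorously executing the Lagrange multiplier step in the Banach space $\mathcal{B}$: one must verify the self-adjointness of $K$ on $\mathcal{B}\cap L^2$ and handle the time-average condition defining $\mathcal{B}$ together with the $L_{\mathcal{P}}^{-1}$ correction. Since existence of the maximizer is assumed, compactness of $K$ is not needed, and the essential content reduces to the first-order variational equation and the eigenvalue computation above.
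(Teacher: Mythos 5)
Your proposal is correct and follows the same overall strategy as the paper: a Lagrange multiplier equation at the maximizer, followed by an ODE argument that forces $\bar z(t)$ to be a linear rotation at one of the admissible frequencies $\tilde\theta_j/T$, with maximality picking out the slowest one, $\tilde\theta_1$. The two executions differ in one notable way: the paper first writes the multiplier condition as $K\bar z=\lambda\nabla V(\bar z)+\xi$ with $V(z)=|z|^p$ and then invokes Legendre--Fenchel duality to reduce to a linear equation $u'=\tilde cJu$, whereas you differentiate $K\bar z=\lambda\bar z$ directly (using $(K\bar z)'=J\bar z$) and obtain $\lambda\bar z'=J\bar z$ at once. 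Your route is cleaner and arguably better matched to the $L^2$ normalization defining $\Gamma$ (the paper's choice $V=|z|^p$ with an $L^2$-type constraint looks like a slip for $V=|z|^2$, though their duality step still lands in the same linear ODE). You also make explicit the computation $(Kz)(t)=TA^{-1}z(t)-T\mathcal P A^{-1}\xi$ on $M$, so that the two-sided bound becomes transparent as $T^2\langle\eta,\Theta^{-1}\eta\rangle$ with $|\eta|=1$; the paper leaves this as ``a simple calculation.''

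One small correction: the multiplier equation should read $K\bar z=\lambda\bar z+\xi$ for some constant $\xi\in\ker(I_{2n}-Q)$, the extra term coming from the linear constraint $\mathcal P\int_0^T\bar z\,\mathrm dt=0$ that defines $\mathcal B$. This does not change anything in your argument: differentiating kills $\xi$, and the identity $\lambda=b/T$ is preserved because $\int_0^T\bar z\,\mathrm dt\in\mathrm{Im}(I_{2n}-Q)$ is orthogonal to $\xi$. You should state the corrected multiplier equation for accuracy. Everything else --- the bootstrap to $C^1$, the boundary condition $e^{TJ/\lambda}\bar z(0)=Q\bar z(0)$, the mod-$2\pi$ analysis using positivity of $\lambda$, and the identification of $\bar z$ with $Q(t)\xi$ on the blocks with $\tilde\theta_j=\tilde\theta_1$ --- is sound.
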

\begin{proof}
Let $V(z)=|z|^p$. Then there exist some $\lambda\in\mathbf{R}$ and $\xi\in\ker(I_{2n}-Q)$ such that
\begin{align}\label{3.41}
K\bar{z}=\lambda\nabla V(\bar{z})+\xi.
\end{align}
As in \eqref{3.37}, $T$ is the minimum positive $Q$-rotating period of $\bar{z}$.
Equation \eqref{3.41} is equivalent to the system
\begin{eqnarray}\label{3.42}
\bigg\{\begin{array}{ccc}
u=K\bar{z}-\xi,\\
u=\lambda\nabla V(\bar{z}).
\end{array}
\end{eqnarray}
By the duality principle, $u$ satisfies the system
\begin{align}\label{3.43}
u'=cJ\nabla w(u),
\end{align}
where $c$ is a constant and $w(u)=|u|^q$.
Since $w(u(t))$ is constant for all $t\in\mathbf{R}$, we see $|u(t)|$ is constant for $t\in\mathbf{R}$. Then \eqref{3.43}
has the form
\begin{align}
u'=\tilde{c}J u,
\end{align}
for some constant $\tilde{c}\in\mathbf{R}$. Since $T$ is the minimum positive $Q$-rotating period, we have that
$\tilde{c}=\frac{\tilde{\theta}_i}{T}$ for some $1\leq i\leq n$, and $u(t)$ is in the invariant subspace corresponding to eigenvalues $e^{\pm \sqrt{-1}\tilde{\theta}_i}$ of $Q$.
That is,
$$u(t)=e^{\frac{\tilde{\theta}_i}{T}Jt}u(0)=Q(t)u(0)$$
with some $u(0)$ in the invariant subspace corresponding to eigenvalues $e^{\pm \sqrt{-1}\tilde{\theta}_i}$ of $Q$. Then
$$\bar{z}(t)=-Ju'(t)=e^{\frac{\tilde{\theta}_i}{T}Jt}\bar{z}(0)=Q(t)\bar{z}(0),$$
for some $1\leq i\leq n$, where $\bar{z}(0)$ is in the invariant subspace corresponding to eigenvalues $e^{\pm \sqrt{-1}\tilde{\theta}_i}$ of $Q$ and $|\bar{z}(0)|=1$.
Through a simple calculation, we see that
$\int_0^T\langle z,Kz\rangle\mathrm{d}t$ attains the maximum, when $\tilde{\theta}_i=\tilde{\theta}_1$.
Moreover, for each $z\in M$, we have
$$\frac{T^2}{\tilde{\theta}_n}\leq\int_0^T\langle z,Kz\rangle\mathrm{d}t\leq\frac{T^2}{\tilde{\theta}_1}=b.$$
\end{proof}

Now we give the proof of Theorem \ref{th1}.
\begin{proof}[Proof of Theorem \ref{th1}]
For each $z\in M$, choose $\lambda=\lambda(z)>0$ such that
$$p\lambda^p\int_0^TH^*(z)\mathrm{d}t-\lambda^2\int_0^T\langle z,Kz\rangle\mathrm{d}t=0.$$
Consider the continuous map $\rho(z)=\lambda(z)z$ on $M$. It is easy to see that
$$\rho(\mathcal{Q}(s)z)=\mathcal{Q}(s)\rho(z)\ \ \ \forall s\in\mathbf{R}.$$
Then by Theorem \ref{th2} one has
$$\ind\rho(M)\geq\ind M=n.$$
By \eqref{3.25} and Lemma \ref{lem3.7}, for each $z\in M$ we obtain
\begin{align*}
\lambda^{2-p}b\leq\frac{\lambda^{2-p}\tilde{\theta}_n}{\tilde{\theta}_1}\int_0^T\langle z,Kz\rangle\mathrm{d}t=\frac{p\tilde{\theta}_n}{\tilde{\theta}_1}\int_0^TH^*(z)\mathrm{d}t
\leq\frac{\tilde{\theta}_nTR^p}{\tilde{\theta}_1},
\end{align*}
which implies that
\begin{align*}
\lambda\geq\left(\frac{\tilde{\theta}_1}{\tilde{\theta}_n}\right)^{\frac{1}{p-2}}T^{\frac{1}{2-p}}R^{\frac{p}{2-p}}b^{\frac{1}{p-2}}.
\end{align*}
Hence
\begin{align*}
\sup_{y\in\rho(M)}E(y)&=\sup_{z\in M}\int_0^T\left(\lambda^p H^*(z)-\frac{\lambda^2}{2}\langle z,Kz\rangle\right)\mathrm{d}t\\
&=\sup_{z\in M}\left(\frac{1}{p}-\frac{1}{2}\right)\lambda^2\int_0^T\langle z,Kz\rangle\mathrm{d}t\\
&\leq\left(\frac{1}{p}-\frac{1}{2}\right)\left(\frac{\tilde{\theta}_1}{\tilde{\theta}_n}\right)^{\frac{2}{p-2}}T^{\frac{2}{2-p}}R^{\frac{2p}{2-p}}b^{\frac{p}{p-2}}\\
&=c_0\left(\bigg(\frac{\tilde{\theta}_1}{\tilde{\theta}_n}\bigg)^{-\frac{1}{p}}T^{\frac{1}{p}}R\right)^{\frac{2p}{2-p}}.
\end{align*}
Since $R<\sqrt{2}r$, there exists a sufficiently large $p$ such that
$$\bigg(\frac{\tilde{\theta}_1}{\tilde{\theta}_n}\bigg)^{-\frac{1}{p}}T^{\frac{1}{p}}R<\sqrt{2}r.$$
It follows that
$$\sup_{y\in\rho(M)}E(y)<2^{\frac{p}{2-p}}c_0r^{\frac{2p}{2-p}}\leq m^*.$$
For $1\leq j\leq n$, by \eqref{2.14} one has
$$c_j\leq\sup_{y\in\rho(M)}E(y)<2^{\frac{p}{2-p}}c_0r^{\frac{2p}{2-p}}<0.$$
It follows from $\fix\{\mathcal{Q}(s)\}=\{0\}$ in $\mathcal{B}$ and $E(0)=0$ that
$$K_{c_j}\cap\fix\{\mathcal{Q}(s)\}=\emptyset,\ \ \ 1\leq j\leq n,$$
where $c_j$ is defined by \eqref{02.15}.
By Theorem \ref{th3.1} and Lemma \ref{p2}, $E(y)$ has at least $n$ $\mathcal{Q}(s)$-critical orbits
$\mathcal{Q}(s)y_j\in K_{c_j}$ for $1\leq j\leq n$.
Hence $z_j=\nabla H^*(y_j)$ is a $(Q,T)$-rotating periodic solution of \eqref{1.1}.
Since $E(y_j)<m^*$, $T$ is the minimum positive $Q$-rotating period of $y_j$. Clearly, $T$ is also the minimum positive $Q$-rotating period of $z_j$. Assume $d_j= H(z_j)$ and let
$$w_j(t)=(qd_j)^{-\frac{1}{q}}z_j((qd_j)^{\frac{2-q}{q}}t).$$
Then $w_j(t)$ is a $(Q,(qd_j)^{\frac{q-2}{q}}T)$-rotating periodic solution of \eqref{1.1} on $H^{-1}(\frac{1}{q})$.
If $w_j$ and $w_k$ describe the same orbit, there exists some $s_0\in\mathbf{R}$ such that
$w_k=\mathcal{Q}(s_0)w_j$ which implies $d_k=d_j$. Hence $z_j$ and $z_k$ also describe the same orbit, that is,
$z_k=\mathcal{Q}(s_1)z_j$ for some $s_1\in\mathbf{R}$. Then
$$y_k=-Jz_k'=-\mathcal{Q}(s_1)Jz_j'=\mathcal{Q}(s_1)y_j.$$
It follows that $y_j$ and $y_k$ describe the same orbit.
Thus, there exist at least $n$ $Q$-rotating periodic orbits of \eqref{1.1} on the energy surface $S$, proving the theorem.
\end{proof}
\begin{remark}
For the case $Q=I_{2n}$, the $\mathcal{Q}(s)$ index and $S^1$ index are consistent. In applying Theorem \ref{th3.1}, if $c_i\neq c_j$ for all $1\leq i, j\leq n$ and $i\neq j$, then there are exactly $n$ periodic orbits; however, if there exist $i\neq j$ such that $c_j=c_j$, then there are infinitely many periodic orbits.
\end{remark}

\baselineskip 9pt \renewcommand{\baselinestretch}{1.08}

\end{document}